\newcommand{\inte}[2][X]{\mathrm{int}_{#1}\!\left(#2\right)}
\newcommand{\Ec}{\mathfrak{E}_c}
\newtheoremstyle{theorem}
     {11pt}
     {11pt}
     {}
     {}
     {\bfseries}
     {}
     {.5em}
     {\noindent\thmnumber{#2}. \thmname{#1}{\rm\thmnote{#3}}}
\theoremstyle{theorem}
\newtheorem{thm}{Theorem}
\newtheorem{defi}[thm]{Definition}
\newtheorem{obs}[thm]{Observation}
\newtheorem{coro}[thm]{Corollary}
\newtheorem{lemma}[thm]{Lemma}
\newtheorem{prop}[thm]{Proposition}
\newtheorem{claim}{Claim}[thm]
\newtheorem{ques}[thm]{Question}
\begin{document}

\title{Smooth fans that are endpoint rigid}
\author[Hern\'andez-Guti\'errez]{Rodrigo Hern\'andez-Guti\'errez}
\address[Hern\'andez-Guti\'errez]{Departamento de Matemáticas, Universidad Autónoma Metropolitana campus Iztapalapa, Av. San Rafael Atlixco 186, Leyes de Reforma 1a Sección, Iztapalapa, 09310, Mexico city, Mexico}
\email[Hern\'andez-Guti\'errez]{rod@xanum.uam.mx}
\author[Hoehn]{Logan C. Hoehn}
\address[Hoehn]{Nipissing University, Department of Computer Science \& Mathematics, 100 College Drive, Box 5002, North Bay, Ontario, Canada, P1B 8L7}
\email[Hoehn]{loganh@nipissingu.ca}

\hyphenation{Me-tro-po-li-ta-na}

\makeatletter
\@namedef{subjclassname@2020}{\textup{2020} Mathematics Subject Classification}
\makeatother

\subjclass[2020]{Primary: 54F50; Secondary: 54F15, 54G20, 54F65}
\keywords{smooth fan, rigidity, Lelek fan, Erd\H{o}s space, almost zero-dimensional}

\begin{abstract}
Let $X$ be a smooth fan and denote its set of endpoints by $E(X)$.  Let $E$ be one of the following spaces: the natural numbers, the irrational numbers, or the product of the Cantor set with the natural numbers.  We prove that there is a smooth fan $X$ such that $E(X)$ is homeomorphic to $E$ and for every homeomorphism $h \colon X \to X$, the restriction of $h$ to $E(X)$ is the identity.  On the other hand, we also prove that if $X$ is any smooth fan such that $E(X)$ is homeomorphic to complete Erd\H{o}s space, then $X$ is necessarily homeomorphic to the Lelek fan; this adds to a 1989 result by W\l{}odzimierz Charatonik.
\end{abstract}

\date{\today}
\dedicatory{Dedicated to Professor W\l{}odzimierz Charatonik (1957--2021).}

\maketitle

\section{Introduction}

Recall that a space $X$ is \emph{homogeneous} if for every two points $x,y\in X$ there exists a homeomorphism $h\colon X\to X$ with $h(x)=y$. The study of homogeneous spaces in topology has a long history and it is still a relevant topic nowadays. For example, see the survey on homogeneity \cite{arh-vm-homogeneity}.

Many of the classical examples of spaces are homogeneous; for example: the real line, the circle, the Cantor set, the space of irrational numbers and the Hilbert cube.  However, most of the continua studied by continuum theorists are not homogeneous. Indeed, even the arc, which is considered the simplest non-degenerate continuum, is not homogeneous.  Nevertheless, even when they are not homogeneous, many of these continua share the common feature that, informally speaking, homeomorphisms can only distinguish a finite number of different points.

To formalize this notion, let $\mathcal{H}(X)$ denote the group of autohomeomorphisms of the space $X$. Notice that $\mathcal{H}(X)$ acts on $X$ by $\langle h,x \rangle \mapsto h(x)$.  The orbit of a point $x \in X$ under this action is $\{h(x) \colon h \in \mathcal{H}(X)\}$.  The \emph{homogeneity degree} of $X$ is defined to be the number of these orbits.  It turns out that many relevant examples of spaces in continuum theory have a finite degree of homogeneity.  See the discussion in the introduction of \cite{a-h-pj} for a comprehensive list of results about continua with low homogeneity degree.

An important class of continua is the class of dendroids.  Two interesting subclasses of dendroids are dendrites and smooth fans.  In \cite{a-pj} and \cite{a-h-pj} the authors gave complete lists of all dendrites with homogeneity degree $3$ and all smooth fans with homogeneity degree $3$ and $4$, respectively.
 
In this paper we will go to the exact opposite of homogeneity.  Recall that a topological space is called \emph{rigid} if its only autohomeomorphism is the identity.  Equivalently, $X$ is rigid if every orbit is a point.  When we say that a homeomorphism is trivial we mean that it is equal to the identity function.

For zero-dimensional separable metric spaces, it known from \cite{rigid-borel} that an infinite rigid space cannot be Borel (in any Polish space in which it is embedded) and in fact, as shown in \cite{rigid-analytic}, the statement ``there are infinite zero-dimensional rigid separable metric spaces that are analytic" cannot be decided from the axioms of \textsf{ZFC}.  However, for continua, rigidity is not uncommon.  Well-known examples of rigid continua are Cook continua \cite{cook}.  Of course Cook continua are complicated continua but we can make the following observations.

\begin{obs}
\label{rigid-dendrite}
There exists a rigid dendrite.
\end{obs}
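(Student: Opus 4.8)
The plan is to build a dendrite $D$ whose set of ramification points is dense in $D$ and consists of points with pairwise distinct finite Menger--Urysohn orders; such a $D$ is then automatically rigid. Indeed, for a ramification point $x$ of finite order in a dendrite the number of connected components of $D\setminus\{x\}$ equals the order of $x$, while for any other point this number is at most $2$. Since a homeomorphism $h\colon D\to D$ carries $D\setminus\{x\}$ onto $D\setminus\{h(x)\}$, it preserves this number, and so it maps the unique ramification point of each order $n$ onto itself. Thus $h$ fixes every ramification point of $D$; as these are dense and the fixed-point set of $h$ is closed, $h$ is the identity.

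To construct $D$ I would use the standard technique of presenting a dendrite as a limit of finite trees. Build an increasing sequence of finite trees $T_1\subseteq T_2\subseteq\cdots$ (realized, say, in the Hilbert cube, or as the inverse limit of the $T_n$ under the natural monotone retractions $T_{n+1}\to T_n$) as follows. At stage $n$, choose a point $p_n$ in the interior of some edge of $T_n$, distinct from $p_1,\dots,p_{n-1}$, and attach $n$ new free arcs to $T_n$ at $p_n$, so that $p_n$ has order $2$ in $T_n$ but order $n+2$ in $T_{n+1}$; since no later $p_m$ is ever placed at an already-ramified point, the order of $p_n$ is never altered again, and hence in $D$ each $p_n$ is a ramification point of order $n+2$, with these orders pairwise distinct. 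Requiring that at stage $n$ all the newly attached arcs, as well as the two subarcs into which $p_n$ splits its edge, have diameter $<2^{-n}$ guarantees that $D:=\overline{\bigcup_n T_n}$ is a locally connected continuum; since no stage ever creates a cycle, $D$ contains no simple closed curve, so $D$ is a dendrite.

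Finally, a routine bookkeeping argument run alongside the construction makes the ramification points dense: enumerating the pairs $(k,e)$ with $e$ an edge of $T_k$ and, at suitable stages, choosing $p_n$ inside the part of $D$ coming from $e$, one ensures every edge is eventually subdivided, so $D$ has no free arc and $\{p_n:n\in\mathbb{N}\}$ is dense. Standard properties of inverse limits of trees with monotone bonding maps show that $\{p_n:n\in\mathbb{N}\}$ is in fact exactly the set of ramification points of $D$, which is all that was used above. The only real work is this construction of $D$, and within it the point to get right is the simultaneous bookkeeping — keeping the diameters small, keeping each stage's order fresh, and subdividing every edge eventually — but these demands do not conflict, so the construction goes through and yields a rigid dendrite.
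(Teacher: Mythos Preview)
Your proposal is correct and follows essentially the same approach as the paper: both construct a dendrite whose ramification points are dense and have pairwise distinct (finite) orders, then conclude rigidity from the fact that homeomorphisms preserve order and hence fix a dense set. The paper's argument is just a one-line sketch pointing to the Wa\.zewski-type construction, while you have spelled out the inverse-limit-of-trees details, but the underlying idea is identical.
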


To prove Observation \ref{rigid-dendrite} one may follow the construction of Wazewski's universal dendrite (see, for example, \cite[section 10.37]{nadler}): it is possible to construct a dendrite with a dense set of ramification points such that any two ramification points have different orders.  Then any autohomeomorphism is trivial when restricted to the dense set of ramification points, so it is trivial.

The next observation concerns smooth fans; see Section \ref{sec:smooth fans} below for the relevant definitions.

\begin{obs}
There is no smooth fan that is rigid.
\end{obs}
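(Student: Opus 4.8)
The plan is to produce, for an arbitrary smooth fan $X$ with top $v$, an autohomeomorphism of $X$ different from the identity. Since $v$ is the unique ramification point of $X$, every autohomeomorphism fixes $v$, so it suffices to move some other point. I would work with the standard representation of a smooth fan as a quotient of a closed subset of the Cantor fan: write $X = \bigl(\bigcup_{c\in C}(\{c\}\times[0,f(c)])\bigr)\big/(C\times\{0\})$, where $C$ is a Cantor set, $f\colon C\to[0,1]$ is upper semicontinuous with $\max f=1$, and $v$ is the class of $C\times\{0\}$; for $K\subseteq C$ let $Y_K$ be the sub-fan lying over $K$, so $v\in Y_K$. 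The argument uses three basic ways to produce nontrivial autohomeomorphisms. \emph{Gluing and swapping:} if $K$ is clopen then $Y_K$ and $Y_{C\setminus K}$ meet only in $v$, so any autohomeomorphism of $Y_K$ extends by the identity on $Y_{C\setminus K}$ (continuity at $v$ is automatic); more generally, if $K_1,K_2$ are disjoint clopen sets with $Y_{K_1}\cong Y_{K_2}$, then interchanging these sub-fans and fixing the rest is a nontrivial autohomeomorphism of $X$. \emph{Sliding:} if $X$ has a free arc, perturb the identity within a compact subarc of it. \emph{Stretching in a gap:} if $f(c)\notin[a,b)$ for every $c$, where $0<a<b\le 1$, and $\phi$ is a homeomorphism of $[0,1]$ equal to the identity off some $[a',b']\subseteq(a,b)$ and nontrivial on it, then $(c,t)\mapsto(c,\phi(t))$ is a well-defined, continuous, nontrivial autohomeomorphism of $X$, because each $[0,f(c)]$ either misses $[a',b']$ or contains it, and in both cases $\phi$ maps it onto itself.

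Next I would split into cases. If $X$ has a free arc, slide. If the set of lengths $f(C)$ is not dense in $[0,1]$, there is a gap $[a,b)$ with $a>0$, so stretch. If $\{c\colon f(c)\ge\varepsilon\}$ has nonempty interior for some $\varepsilon>0$, choose a clopen $K$ inside it; every length of $Y_K$ then lies in $[\varepsilon,1]$, so stretching inside a gap $[a,b)\subseteq(0,\varepsilon]$ gives a nontrivial autohomeomorphism of $Y_K$, which we glue by the identity. What remains is the ``generic'' case: $X$ has no free arc, $f(C)$ is dense in $[0,1]$, and $\{f\ge\varepsilon\}$ is nowhere dense for each $\varepsilon>0$ (so $\{f=0\}$ is a dense $G_\delta$ by the Baire category theorem). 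A short argument shows that in this case the absence of free arcs forces $E(X)$ to be dense-in-itself: if $e$ is an endpoint lying over $c_0$ at height $\ell=f(c_0)$, then $e$ is approached by points on other arcs, whose heights approach $\ell$; by upper semicontinuity the lengths of those arcs also approach $\ell$, so their endpoints approach $e$.

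The generic case is the crux, and there the only moves available are autohomeomorphisms that genuinely permute endpoints, which one extracts from the self-similarity of $X$ via the swapping construction. When $X$ is the Lelek fan this is easy: by the characterization of the Lelek fan as the unique smooth fan with a dense set of endpoints, every $Y_K$ over a nonempty clopen $K\subsetneq C$ is again a Lelek fan, so any two of them are homeomorphic and the swap applies. I expect the real obstacle to be the remaining smooth fans of generic type that are \emph{not} the Lelek fan — those whose endpoint set is dense-in-itself but not dense in $X$. For these one must build an endpoint-permuting autohomeomorphism directly; one approach would be a back-and-forth construction, producing an increasing sequence of finite clopen partitions of $C$ whose members can be matched, compatibly across scales, into homeomorphic pairs, and passing to the limit. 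The delicate point — which is essentially the whole content of the observation in this last case — is to show that such compatible pairings always exist, i.e. that the generic-case hypotheses prevent a smooth fan from being ``rigid with respect to all of its clopen decompositions.''
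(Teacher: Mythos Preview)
Your case analysis is more elaborate than needed, and the place you flag as the crux --- the ``generic'' non-Lelek case --- is left with only a speculative back-and-forth sketch. That is a genuine gap, but more importantly it is an unnecessary one: the local stretching idea you already have in hand disposes of \emph{every} non-Lelek smooth fan in one stroke.

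The paper's proof splits into just two cases. If $E(X)$ is dense in $X$, then $X$ is the Lelek fan and one cites known non-rigidity (your swap argument works too). If $E(X)$ is \emph{not} dense --- which covers your entire ``generic non-Lelek'' residue --- then some open subset of $X$ misses $E(X)$, so there exist a clopen $U\subseteq C$ and $0<a<b<1$ with $(U\times[a,b])\cap X\neq\emptyset$ but $(U\times[a,b])\cap E(X)=\emptyset$. This means $f(c)\notin[a,b]$ for every $c\in U$, while $f(c)>b$ for at least one $c\in U$. Now take any nontrivial homeomorphism $\phi$ of $[a,b]$ fixing $a$ and $b$, set $H(c,t)=(c,\phi(t))$ on $U\times[a,b]$, and $H=\mathrm{id}$ elsewhere. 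Continuity holds because $U$ is clopen and $\phi$ fixes $a,b$; each leg over $U$ either stays below $a$ or crosses the whole band $[a,b]$, so $H$ maps $X$ to itself and is nontrivial.

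The point you missed is that ``$E(X)$ not dense'' directly hands you a \emph{local} gap in the lengths: a clopen $U$ over which $f$ avoids an interval $[a,b]$, with the gap allowed to sit anywhere in $(0,1)$, not only near $0$. Your case~3 is the special instance where the gap is $(0,\varepsilon)$ and all lengths over $K$ exceed $\varepsilon$; but you do not need $\{f\ge\varepsilon\}$ to have interior, nor $f(C)$ to miss an interval globally. Once you combine your ``stretching in a gap'' with your ``gluing over a clopen set'' in this slightly more general form, the back-and-forth is superfluous and the observation is immediate.
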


\begin{proof}
%For a smooth fan $X$, consider the set $E(X)$ of endpoints of $X$. If $E(X)$ is dense in $X$, then $X$ is homeomorphic to the Lelek fan according to \cite{charatonik-lelek}; as observed in \cite{a-h-pj} in this case $E(X)$ is an uncountable orbit.  Now, assume that $E(X)$ is not dense in $X$.  We may assume that $X$ is a subset of the Cantor fan (see \cite{charatonik-fans} and \cite{eberhart}). By removing the vertex of the Cantor fan from $X$, what remains is homeomorphic to the product of a closed subset of the Cantor set times the interval $[0,1)$ so it follows that there is an open set $U \subset X \smallsetminus E(X)$ and a homeomorphism $h\colon \overline{U}\to C \times[0,1]$ such that $h[U]=C \times(0,1)$, where $\overline{U}$ is the closure of $U$ and $C$ is some non-empty, zero-dimensional space.  Let $f\colon[0,1]\to[0,1]$ be any non-trivial homeomorphism with $f(0)=0$ and $f(1)=1$, and let $g\colon C\times[0,1]\to C\times[0,1]$ be defined by $g(\langle x,t\rangle)=\langle x,f(t)\rangle$. Define $H \colon X \to X$ by $H(x)=x$ if $x\notin U$ and $H(x)=(h^{-1}\circ g\circ h)(x)$ if $x\in U$. Then $H$ is a non-trivial homeomorphism.
%
For a smooth fan $X$, consider the set $E(X)$ of endpoints of $X$. If $E(X)$ is dense in $X$, then $X$ is homeomorphic to the Lelek fan according to \cite{charatonik-lelek}; as observed in \cite{a-h-pj} in this case $E(X)$ is an uncountable orbit.  Now, assume that $E(X)$ is not dense in $X$.  We may assume that $X$ is a subset of the Cantor fan (see \cite{charatonik-fans} and \cite{eberhart}), so that if we remove the vertex of $X$ from $X$, the result is a subset of $C \times (0,1]$, where $C$ is the Cantor set.  Since $E(X)$ is not dense in $X$, there exist a clopen set $U \subset C$ and $0 < a < b < 1$ such that $(U \times [a,b]) \cap X \neq \emptyset$ and $(U \times [a,b]) \cap E(X) = \emptyset$.  Let $f\colon [a,b] \to [a,b]$ be any non-trivial homeomorphism with $f(a) = a$ and $f(b) = b$, and let $g\colon U \times[a,b] \to U \times [a,b]$ be defined by $g(\langle p,t\rangle)=\langle p,f(t)\rangle$.  Define $H \colon X \to X$ by $H(x)=x$ if $x\notin U \times [a,b]$ and $H(x)=g(x)$ if $x\in U \times [a,b]$. Then $H$ is a non-trivial homeomorphism.
\end{proof}

So it would seem that there is nothing remarkable about smooth fans and rigidity.  However, in this paper we would like to argue that this is not the case.  We cannot reach rigidity for smooth fans but we can reach it in the endpoints in the sense that there exist smooth fans $X$ such that every homeomorphism $h \colon X \to X$ is such that $h$ is trivial when restricted to the set of endpoints of $X$.  A smooth fan with this property will be called \emph{endpoint rigid}.

In Section \ref{sec:endpoint rigid}, we will produce several examples of endpoint rigid smooth fans, beginning with Proposition \ref{prop:K-fan exists}, in which such fans are constructed whose set of endpoints is homeomorphic to (1) the natural numbers, (2) the irrationals, and (3) the product of the Cantor set with the natural numbers.  Notice that each of these three endpoint sets is homogeneous.  So even if the set of endpoints of a smooth fan is homogeneous, in some sense this homogeneity is completely lost by the way it is placed inside the smooth fan.

It remains an open problem (see Questions \ref{open-1}, \ref{open-2} and \ref{open-3}) exactly which spaces may be obtained as the set of endpoints of an endpoint rigid smooth fan.  Motivated by this question, we show in Section \ref{sec:Lelek fan characterization} that the set of endpoints of an endpoint rigid smooth fan cannot be homeomorphic to complete Erd\H{o}s space (defined in Section \ref{sec:endpoint sets} below).  In fact, we prove in Theorem \ref{thm:E(X) not dense} below that the Lelek fan is the only smooth fan whose set of endpoints is homeomorphic to complete Erd\H{o}s space, thus obtaining a new characterization of the Lelek fan (see Corollary \ref{coro:Lelek fan characterization}).

\section{Preliminaries}

All the spaces considered in this paper are separable and metric.  A \emph{continuum} is a compact, connected metric space.  A continuum is \emph{non-degenerate} if it contains more than one point.  An \emph{arc} is a continuum which is homeomorphic to the interval $[0,1]$.

We denote the natural numbers by $\omega = \{0,1,2,\ldots\}$.  Given a point $x$ in an arcwise connected continuum $X$ and a natural number $n \in \omega\smallsetminus\{0\}$, we say the \emph{order} of $x$ is at least $n$ if there exist $n$ arcs $A_1,\ldots,A_n \subset X$, all of which have $x$ as one endpoint, and which are otherwise pairwise disjoint.  We say $x$ has order (exactly) $n$ if the order of $x$ is at least $n$ but not at least $n+1$.

Suppose $X$ is a continuum in which for any two distinct points $x,y \in X$, there is a unique arc in $X$ whose endpoints are $x$ and $y$.  In such a case, we denote this arc by $xy$.  In particular, we will use this notation with (smooth) fans throughout this paper.

\subsection{Smooth fans}
\label{sec:smooth fans}

Let $C$ denote the Cantor set.  The \emph{Cantor fan} $F_C$ is defined to be the cone over the Cantor set.  A \emph{smooth fan} is any non-degenerate continuum which is homeomorphic to a subcontinuum of the Cantor fan (see \cite{charatonik-fans} and \cite{eberhart}).  We will always assume that an embedding of a smooth fan $X$ in the Cantor fan $F_C$ contains the cone point of $F_C$, and that this point has order at least $2$ in $X$.  The point of $X$ corresponding to the cone point of $F_C$ is called the \emph{vertex} of $X$.  It is uniquely defined if it has order at least $3$.  A point $e \in X$ is an \emph{endpoint} if it has order $1$.  We denote by $E(X)$ the set of all endpoints of $X$.

The \emph{Lelek fan}, constructed in \cite{lelek}, is the unique smooth fan $X$ with the property that $E(X)$ is dense in $X$ \cite{charatonik-lelek} (see also Corollary \ref{coro:Lelek fan characterization} below).

\subsection{USC functions}
\label{sec:USC functions}

For the sake of explicit constructions of smooth fans, it will be convenient to make use of the language of upper semi-continuous functions defined on the Cantor set.  Recall that a function $\varphi \colon C \to [0,\infty)$ is \emph{upper semi-continuous} (\emph{USC}) if for every $t \in (0,\infty)$ the set $\varphi^\leftarrow[(-\infty,t)]$ is open.  Following \cite{dijkstra-vm-memAMS}, we define
\begin{align*}
G_0^\varphi &= \{\langle x,\varphi(x) \rangle \colon x \in C, \, \varphi(x) > 0\}, \textrm{ and} \\
L_0^\varphi &= \{\langle x,y \rangle \colon x \in C, \, 0 \leq y \leq \varphi(x)\}.
\end{align*}
Observe that $\varphi$ is USC if and only if $L_0^\varphi$ is closed in $C \times \mathbb{R}$.

A USC function $\varphi \colon C \to [0,\infty)$ will be called \emph{non-degenerate} if there are at least two distinct points $x_1,x_2 \in C$ such that $\varphi(x_1) > 0$ and $\varphi(x_2) > 0$.

\begin{lemma}
\label{lemma:smooth fan USC}
Given any non-degenerate USC function $\varphi \colon C \to [0,\infty)$ defined on the Cantor set $C$, the space $L_0^\varphi / (C \times \{0\})$ is a smooth fan whose endpoint set is the image of $G_0^\varphi$ under the quotient projection.  Conversely, given any smooth fan $X$, there exists a non-degenerate USC function $\varphi \colon C \to [0,1]$ defined on the Cantor set $C$ such that $L_0^\varphi / (C \times \{0\}) \approx X$ and $G_0^\varphi \approx E(X)$.
\end{lemma}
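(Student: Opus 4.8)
The plan is to handle the two directions separately. For the forward direction, let $\varphi \colon C \to [0,\infty)$ be a non-degenerate USC function and set $Y = L_0^\varphi$. First I would note that $Y$ is compact: it is a closed subset (by the USC characterization already recorded in the excerpt) of $C \times [0, \max \varphi]$, and $\max\varphi$ is finite because an USC function on a compact space is bounded above. Next I would observe that $Y$ is arcwise connected, indeed that $C \times \{0\} \subseteq Y$ and every point $\langle x, y\rangle \in Y$ is joined to $\langle x, 0 \rangle$ by the obvious vertical segment inside $Y$; hence the quotient $X := Y/(C\times\{0\})$ is a continuum in which the image $v$ of $C \times \{0\}$ connects to every point by an arc. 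I would then verify that $X$ embeds in the Cantor fan $F_C$: the map sending the class of $\langle x,y\rangle$ to the point of $F_C$ at height $y$ over $x$ is a continuous injection from the compact space $X$ into $F_C$, hence an embedding, and its image is a subcontinuum of $F_C$ containing the cone point. Since $\varphi$ is non-degenerate, there are two points over which $Y$ has positive height, so the image is non-degenerate and the cone point has order $\geq 2$; thus $X$ is a smooth fan with vertex $v$. Finally I would identify the endpoints: a point of $X$ other than $v$ corresponds to some $\langle x,y\rangle$ with $0 < y \le \varphi(x)$, and such a point has order $1$ in $X$ precisely when $y = \varphi(x)$ (when $y < \varphi(x)$ the point lies in the interior of the vertical segment and has order $\ge 2$; when $y=\varphi(x)$ the point is a local endpoint of that segment and no other arc from $v$ passes through it, since distinct "prongs" of the fan meet only at $v$). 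Hence $E(X)$ is exactly the image of $G_0^\varphi$.

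For the converse, I would start from an arbitrary smooth fan $X$, which by definition is (homeomorphic to) a subcontinuum of $F_C$ containing the cone point $v$. Writing $F_C = (C \times [0,1])/(C\times\{0\})$, I would consider the preimage $\widetilde X \subseteq C\times[0,1]$ of $X$ under the quotient map together with $C \times \{0\}$; this is a closed subset of $C \times [0,1]$. The key structural fact I want is that $\widetilde X$ is "downward closed in the fibre direction in the right way": because $X$ is a fan, for each $x \in C$ the fibre $\widetilde X \cap (\{x\}\times[0,1])$ is either just $\{x\}\times\{0\}$ or a segment $\{x\}\times[0,s_x]$ — it must be connected, since any gap in the fibre would, after collapsing $C\times\{0\}$, force a point of order $\ge 2$ to be disconnected from $v$ within its prong, contradicting that $X$ is a (smooth) fan where the unique arc from $v$ to a point over $x$ runs up the fibre. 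Granting this, I would define $\varphi(x) = \sup\{ t : \langle x,t\rangle \in \widetilde X\}$ (with $\sup \emptyset = 0$, though here the fibre always contains $0$). Then $L_0^\varphi = \widetilde X$ by construction, so $\varphi$ is USC because $\widetilde X$ is closed, $\varphi$ is non-degenerate because $X$ is non-degenerate with vertex of order $\ge 2$ (so at least two fibres have positive height — otherwise $X$ would be an arc with $v$ an interior point or an endpoint, which is not a fan with vertex of order $\ge 2$... actually if only one fibre had positive height, $X$ would be an arc, contradiction), and $L_0^\varphi/(C\times\{0\}) \approx X$. The identification $G_0^\varphi \approx E(X)$ follows from the endpoint analysis in the forward direction. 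If one wants $\varphi$ to map into $[0,1]$ rather than $[0,\infty)$, that is automatic here since the original embedding was into $C \times [0,1]$; in general one could rescale.

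The main obstacle I expect is the fibrewise-connectedness claim in the converse: arguing carefully that in a smooth fan embedded in $F_C$ the intersection with each vertical fibre $\{x\}\times[0,1]$ is connected. The point is that a smooth fan is uniquely arcwise connected and the unique arc from the vertex $v$ to any point $p$ lying over $x$ must be contained in $\{x\}\times[0,1]$ (since that fibre, union $\{v\}$, is itself an arc in $F_C$ from $v$ through $p$, and in a uniquely arcwise connected continuum this must be *the* arc $vp$); consequently the whole segment from $v$ up to $p$ lies in $\widetilde X$, giving that the fibre of $\widetilde X$ over $x$ is an initial segment. I would also need to be slightly careful that the embedding of $X$ into $F_C$ genuinely sends the vertex to the cone point (guaranteed by the standing convention in Section~\ref{sec:smooth fans}) and that the resulting $\varphi$ is non-degenerate, i.e. that a fan is not an arc — this is where the hypothesis "vertex has order $\ge 2$" and non-degeneracy of the continuum are used. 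Everything else (compactness, the embedding into $F_C$ being a homeomorphism onto its image, the order computations) is routine and I would not belabour it.
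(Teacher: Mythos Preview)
Your proposal is correct and follows essentially the same approach as the paper's proof; the paper is simply far terser, declaring the forward direction ``clear'' and, for the converse, embedding $F_C$ concretely in $\mathbb{R}^2$ as the union of straight-line segments from the origin to $C\times\{1\}$ and then defining $\varphi(x)$ as the maximum $y$-coordinate of a point of $X$ on the segment to $\langle x,1\rangle$, leaving all verification to the reader. Your use of the abstract quotient $F_C=(C\times[0,1])/(C\times\{0\})$ in place of the $\mathbb{R}^2$ picture is a cosmetic difference, and your fibrewise-connectedness argument (that the arc $vp$ in the uniquely arcwise connected $F_C$ must lie in the single ray through $p$) is exactly the content the paper suppresses when it writes ``maximum $y$-coordinate'' and asks the reader to confirm the rest.
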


\begin{proof}
The forward direction is clear, and is left to the reader.

For the converse, let us construct an explicit embedding of the Cantor fan $F_C$ in $\mathbb{R}^2$: namely, fix a Cantor set $C \subset \mathbb{R}$, let $p = \langle 0,0 \rangle$, and let $F_C$ be the union of all straight-line segments from $p$ to points of $C \times \{1\}$.

Let $X$ be a smooth fan, and assume that $X \subseteq F_C$, and that $p$ is the vertex of $X$ (and the order of $p$ in $X$ is at least $2$).  Define $\varphi \colon C \to [0,1]$ as follows: given $x \in C$, let $\varphi(x)$ be the maximum $y$-coordinate of a point of $X$ on the straight-line segment from $p$ to $\langle x,1 \rangle$.  We leave it to the reader to confirm that this function $\varphi$ satisfies the desired properties.
\end{proof}

\section{Endpoint rigid fans}
\label{sec:endpoint rigid}

\begin{defi}
A smooth fan $X$ will be called \emph{endpoint rigid} if for each homeomorphism $h \colon X \to X$, $h(x) = x$ for all $x \in E(X)$.
\end{defi}

In order to construct endpoint rigid smooth fans, we will make use of the notion of a $\mathcal{K}$-fan, defined next.

Two subsets $A_1,A_2$ of $(0,1)$ will be called \emph{order isomorphic} if there is an order preserving autohomeomorphism $h \colon [0,1] \to [0,1]$ such that $h(A_1) = A_2$.

\begin{defi}
\label{defi:K-fan}
Let $\mathcal{K} = \{K_n \colon n \in \omega\}$ be a countably infinite set of compact subsets of $(0,1)$, no two of which are order isomorphic.  A smooth fan $X$ with vertex $p$ is a \emph{$\mathcal{K}$-fan} if there exists a subset $E_{\mathcal{K}} = \{e_n \colon n \in \omega\} \subseteq E(X)$, such that:
\begin{enumerate}[label=(\roman*)]
    \item \label{en legs} For each $n \in \omega$, $\overline{E(X)} \cap pe_n \smallsetminus \{p,e_n\}$ is order isomorphic to $K_n$;
    \item \label{non-en legs} For each $e \in E(X) \smallsetminus E_{\mathcal{K}}$, $\overline{E(X)} \cap pe \smallsetminus \{p,e\}$ is not order isomorphic to any element of $\mathcal{K}$; and
    \item \label{E_K dense} For each $e \in E(X) \smallsetminus E_{\mathcal{K}}$, $\overline{E_{\mathcal{K}}} \cap pe \smallsetminus \{p\} \neq \emptyset$.
\end{enumerate}
\end{defi}

\begin{prop}
\label{prop:K-fan rigid}
Let $\mathcal{K}$ be a collection as in Definition \ref{defi:K-fan}.  Any $\mathcal{K}$-fan is endpoint rigid.
\end{prop}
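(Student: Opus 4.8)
The plan is to show that an arbitrary homeomorphism $h \colon X \to X$ of a $\mathcal{K}$-fan $X$ is the identity on $E(X)$, in two stages: first pin down the images of the distinguished endpoints $e_n$ using \ref{en legs} and \ref{non-en legs}, and then bootstrap to the remaining endpoints using \ref{E_K dense}. I would open with the standard remarks that $h$ fixes the vertex $p$ — since $E_{\mathcal{K}}$ is infinite, $p$ has order at least $3$ (indeed infinite), and it is the only such point of $X$, while order is a topological invariant — and that $h$ maps endpoints to endpoints, so $h(E(X)) = E(X)$ and hence $h(\overline{E(X)}) = \overline{E(X)}$. As $X$ is a dendroid, it follows that $h$ carries each leg $pe$ onto the unique arc $p\,h(e)$ from $p$ to $h(e)$, and that the restriction $h|_{pe} \colon pe \to p\,h(e)$, being a homeomorphism of arcs that fixes the end $p$, is order preserving for the orderings of $pe$ and $p\,h(e)$ having $p$ as least element.

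Next I would isolate the relevant invariant. For $e \in E(X)$ put $S(e) = \overline{E(X)} \cap pe \smallsetminus \{p,e\}$, regarded as a subset of $(0,1)$ (up to order isomorphism) via an order-preserving identification of the open arc $pe \smallsetminus \{p,e\}$ with $(0,1)$. From $h(\overline{E(X)}) = \overline{E(X)}$, $h(pe) = p\,h(e)$, and $h(\{p,e\}) = \{p,h(e)\}$ one gets $h(S(e)) = S(h(e))$, and since $h|_{pe}$ is order preserving this identification is an order isomorphism; thus $S(e)$ and $S(h(e))$ are order isomorphic for every $e$. Now fix $n$. By \ref{en legs}, $S(e_n)$ — and therefore $S(h(e_n))$ — is order isomorphic to $K_n$. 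By \ref{non-en legs} this rules out $h(e_n) \in E(X) \smallsetminus E_{\mathcal{K}}$, so $h(e_n) = e_m$ for some $m$; then $K_m$ is order isomorphic to $S(e_m) = S(h(e_n))$, which is order isomorphic to $K_n$, and since no two members of $\mathcal{K}$ are order isomorphic we conclude $m = n$, i.e. $h(e_n) = e_n$. Hence $h$ is the identity on $E_{\mathcal{K}}$, and so, by continuity, on $\overline{E_{\mathcal{K}}}$.

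Finally I would dispose of the remaining endpoints. Let $e \in E(X) \smallsetminus E_{\mathcal{K}}$. By \ref{E_K dense} there is a point $q \in \overline{E_{\mathcal{K}}} \cap pe$ with $q \neq p$; then $q = h(q) \in h(pe) = p\,h(e)$, so $q$ lies on both legs $pe$ and $p\,h(e)$ and is different from $p$. Since arcs joining the vertex of a fan to two distinct endpoints meet only at the vertex, this forces $h(e) = e$, which finishes the argument.

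I expect the routine ingredients — order being a topological invariant, unique arcwise connectedness of dendroids, and $h$ commuting with closures and intersections — to cause no difficulty; the step requiring the most care is the last paragraph, where one must combine the fact that a vertex-fixing homeomorphism of a fan permutes its legs with the density clause \ref{E_K dense} and with the elementary structural fact that distinct legs of a fan meet only at the vertex. A secondary point to be careful about is verifying in the second paragraph that $h|_{pe}$ is genuinely order preserving, so that it induces an order isomorphism between the corresponding subsets of $(0,1)$ and not merely a homeomorphism — this is exactly what makes conditions \ref{en legs} and \ref{non-en legs} applicable.
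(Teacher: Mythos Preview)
Your proof is correct and follows essentially the same route as the paper's: use \ref{en legs} and \ref{non-en legs} to see that $h$ fixes each $e_n$, then use \ref{E_K dense} to get a fixed point $q \neq p$ on every remaining leg, forcing $h(e) = e$ since distinct legs meet only at the vertex. The paper's version is a terse sketch of exactly this argument; you have simply filled in the standard details (that $h$ fixes $p$, permutes legs order-preservingly, and extends to the identity on $\overline{E_{\mathcal{K}}}$ by continuity) that the paper leaves implicit.
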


\begin{proof}
Let $X$ be a $\mathcal{K}$-fan with vertex $p$, and let $E_{\mathcal{K}} = \{e_n \colon n \in \omega\}$ be as in Definition \ref{defi:K-fan}.  Let $h \colon X \to X$ be a homeomorphism. It follows from \ref{en legs} and \ref{non-en legs} of Definition \ref{defi:K-fan} that $E_{\mathcal{K}}$ is invariant under $h$.  Now, since no two of the spaces in $\mathcal{K}$ are order isomorphic, by \ref{en legs} of Definition \ref{defi:K-fan} we have that $h$ is the identity on $E_{\mathcal{K}}$.  From \ref{E_K dense} of Definition \ref{defi:K-fan}, we conclude that $h$ fixes at least one point other than $p$ on each leg of $X$, hence it is the identity on $E(X)$.  Thus $X$ is endpoint rigid.
\end{proof}

\begin{prop}
\label{prop:K-fan exists}
Let $\mathcal{K}$ be a collection as in Definition \ref{defi:K-fan}.  There exists a $\mathcal{K}$-fan $X$. Moreover, if $E_{\mathcal{K}} = \{e_n \colon n \in \omega\}$ is as in Definition \ref{defi:K-fan} and $p$ is the vertex of $X$, then $X$ can be constructed so that either:
\begin{enumerate}[label=(\arabic*)]
    \item (Assuming $\emptyset \notin \mathcal{K}$) $E(X)$ is homeomorphic to the natural numbers and $E(X) = E_{\mathcal{K}}$; or
    \item $E(X)$ is homeomorphic to the irrationals, and for each $e \in E(X) \smallsetminus E_{\mathcal{K}}$, $\overline{E(X)} \cap pe = \{e\}$; or
    \item $E(X)$ is homeomorphic to the product of the Cantor set with the natural numbers, and for each $e \in E(X) \smallsetminus E_{\mathcal{K}}$, $\overline{E(X)} \cap pe = \{p,e\}$.
\end{enumerate}
\end{prop}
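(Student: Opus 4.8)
The plan is to build each of the three fans by prescribing an appropriate non-degenerate USC function $\varphi \colon C \to [0,1]$ and invoking Lemma \ref{lemma:smooth fan USC}; the three cases differ only in how the ``non-$\mathcal{K}$'' endpoints are arranged, so the combinatorial skeleton is common to all three. First I would fix, for each $n \in \omega$, a compact set $K_n \subseteq (0,1)$ representing the order-isomorphism type that leg $pe_n$ must carry. The leg $pe_n$ will be realized over a single point $c_n \in C$ as the segment $\{c_n\} \times [0, a_n]$ for suitable heights $a_n \to 0$; I need the closure of the endpoint set, intersected with this leg, to be order isomorphic to $K_n$. The natural way to arrange this is to place a sequence of additional Cantor-set ``columns'' accumulating onto $\{c_n\} \times [0,a_n]$ from one side, with the heights of those columns chosen so that the set of limiting heights along the column $c_n$ is exactly (a rescaled copy of) $K_n$. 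Concretely, I would choose a sequence $c_n^{(k)} \to c_n$ in $C$ and heights $h_n^{(k)}$ so that $\{ \lim_k h_n^{(k_j)} : k_j \to \infty \} = a_n \cdot K_n$ (say), which is possible because $K_n$ is compact in $(0,1)$ and $C$ has no isolated points. This guarantees \ref{en legs}.

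Next I would handle the remaining endpoints, which is where the three cases diverge. In case (1) we simply take no other endpoints at all: set $\varphi(x) = 0$ off the countable set $\{c_n\} \cup \{c_n^{(k)}\}$ and off their limit structure, arrange the $c_n$ and all auxiliary points to be isolated in the subspace $G_0^\varphi$, and check that $E(X) = E_{\mathcal{K}} = \{e_n : n \in \omega\}$ together with the auxiliary endpoints forms a countable discrete (hence $\approx \omega$) set; since $\emptyset \notin \mathcal{K}$, every $K_n$ is nonempty, so the auxiliary columns are genuinely present and each leg $pe_n$ has $\overline{E(X)} \cap pe_n \smallsetminus \{p,e_n\}$ order isomorphic to $K_n$ as required. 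For \ref{non-en legs} and \ref{E_K dense} in this case: the auxiliary endpoints $e_n^{(k)}$ have $\overline{E(X)} \cap pe_n^{(k)} \smallsetminus \{p, e_n^{(k)}\} = \emptyset \notin \mathcal{K}$, and \ref{E_K dense} holds because $e_n^{(k)} \to e_n \in E_{\mathcal{K}}$ along a subarc, so $\overline{E_{\mathcal{K}}} \cap p e_n^{(k)} \smallsetminus \{p\} \ne \emptyset$. In case (2) I want $E(X)$ homeomorphic to the irrationals and each non-$\mathcal{K}$ leg to satisfy $\overline{E(X)} \cap pe = \{e\}$, i.e. those endpoints are limits of the endpoint set only at themselves (from below along their own leg they see nothing, but they are approached by other endpoints from the ``side''). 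Here I would take $\varphi$ supported on a suitable $\sigma$-compact-but-not-compact, nowhere-dense, zero-dimensional subset so that $G_0^\varphi$ is Polish, zero-dimensional, nowhere locally compact, and every point is a non-isolated point of a copy of the irrationals — the cleanest route is to make $G_0^\varphi$ visibly homeomorphic to $\mathbb{P}$ by making $\varphi$ a continuous function on a dense $G_\delta$ set of a Cantor set's complement, with the $\mathcal{K}$-legs spliced in as a countable closed discrete correction that does not destroy the topology. In case (3) I want $E(X) \approx C \times \omega$, so I would build $\varphi$ so that $G_0^\varphi$ is a countable disjoint union of Cantor sets, each at a definite ``level'', and arrange the $\mathcal{K}$-legs $pe_n$ so that near $e_n$ the only endpoints are the auxiliary columns converging to $e_n$ (giving $\overline{E(X)} \cap pe_n \smallsetminus\{p,e_n\}$ order isomorphic to $K_n$) while each non-$\mathcal{K}$ endpoint $e$ satisfies $\overline{E(X)} \cap pe = \{p,e\}$, which holds because each such $e$ lies in a Cantor-set fiber over a single $x \in C$ at a fixed height, the fiber being closed and the endpoints in it converging (in the fan) down to $p$ along distinct legs.

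The key verifications in every case are: (a) $\varphi$ is USC, equivalently $L_0^\varphi$ is closed in $C \times \mathbb{R}$ — this reduces to checking that whenever $x_j \to x$ in $C$ and $y_j \le \varphi(x_j)$ with $y_j \to y$, then $y \le \varphi(x)$, which is straightforward from the explicit description of the support and the heights, the only delicate point being at the $c_n$ where the auxiliary columns accumulate (there we need $\limsup_k h_n^{(k)} \le a_n$, which we arrange since $K_n \subseteq (0,1)$ and we scaled by $a_n$); (b) $\varphi$ is non-degenerate — immediate since there are infinitely many $c_n$ with $\varphi(c_n) = a_n > 0$; (c) $G_0^\varphi$ is homeomorphic to the target endpoint space — this is the case-specific topological identification; and (d) properties \ref{en legs}, \ref{non-en legs}, \ref{E_K dense} of Definition \ref{defi:K-fan} hold, which is read off from the construction as sketched. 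Finally, the ``Moreover'' clauses about $\overline{E(X)} \cap pe$ for $e \notin E_{\mathcal{K}}$ are exactly statements about which points of $G_0^\varphi$ lie in the closure of the vertical segment under each such $e$, and these follow from the way we placed the non-$\mathcal{K}$ endpoints (isolated columns for (1), self-accumulating-only structure for (2), single-fiber structure for (3)).

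I expect the main obstacle to be case (2): producing a USC function $\varphi$ on the Cantor set whose graph $G_0^\varphi$ is homeomorphic to the irrationals \emph{while simultaneously} containing the prescribed countable family of $\mathcal{K}$-legs with their exact order-isomorphism types and with $\overline{E(X)} \cap pe = \{e\}$ for the non-$\mathcal{K}$ endpoints. The tension is that the irrationals are nowhere locally compact and totally imperfect-free in a strong sense, so one must be careful that splicing in the compact $K_n$-patterns along countably many legs does not create local compactness anywhere or change the global homeomorphism type; the resolution is to make the $\mathcal{K}$-legs a \emph{closed discrete} subset of $E(X)$ in the appropriate sense (each $e_n$ isolated from the rest of $E(X)$ except along its own leg) and to note that $\omega$ (as a closed subspace) can be absorbed into $\mathbb{P}$ without changing its topology. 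Carrying out this absorption cleanly, and checking that the resulting $\varphi$ is genuinely USC at the accumulation points $c_n$, is the part that will require the most care.
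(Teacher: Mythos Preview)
Your approach has a structural gap that breaks all three cases, most visibly case~(1). You designate points $c_n$ to carry the $\mathcal{K}$-legs and then introduce \emph{separate} auxiliary columns $c_n^{(k)}$ whose heights trace out $K_n$ along $pe_n$. But those auxiliary columns produce endpoints $e_n^{(k)}$ that are \emph{not} in $E_{\mathcal{K}}$, so in case~(1) you cannot have $E(X) = E_{\mathcal{K}}$ as the statement demands. Your own paragraph reflects this confusion: you first write ``$E(X) = E_{\mathcal{K}}$'' and then treat the $e_n^{(k)}$ as non-$\mathcal{K}$ endpoints satisfying \ref{non-en legs}.

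The same flaw undermines condition~\ref{E_K dense} in every case. You argue that $\overline{E_{\mathcal{K}}} \cap pe_n^{(k)} \smallsetminus \{p\} \neq \emptyset$ because ``$e_n^{(k)} \to e_n$'', but this is the wrong direction: that convergence puts $e_n$ in the closure of the auxiliary endpoints, not a point of $\overline{E_{\mathcal{K}}}$ on the leg $pe_n^{(k)}$. For $\overline{E_{\mathcal{K}}}$ to meet the leg over $c_n^{(k)}$ you would need \emph{other} $\mathcal{K}$-legs accumulating onto that column, which your flat two-level scheme does not provide. Likewise in case~(2), your endpoint set as described is countable (just the $e_n$ and the $e_n^{(k)}$), nowhere near the irrationals; the ``splicing'' you propose to fix this is exactly the missing idea.

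The paper resolves all of this with a single recursive device: index the legs by the full tree $\omega^{<\omega}$ via a bijection $\theta\colon \omega^{<\omega}\to\omega$, and for each $s$ let the endpoints that trace $K_{\theta(s)}$ along $pe_{\theta(s)}$ be precisely the $\mathcal{K}$-endpoints $e_{\theta(s^\frown i)}$ of the immediate successors. Thus every auxiliary endpoint is itself a $\mathcal{K}$-leg, carrying its own $K$-pattern generated by its successors, and so on. This makes $E_{\mathcal{K}}$ self-generating, so case~(1) gives $E(X)=E_{\mathcal{K}}$ exactly; the non-$\mathcal{K}$ endpoints in cases~(2) and~(3) are the branch points $x_f$ for $f\in\omega^\omega$, which are limits of $E_{\mathcal{K}}$ by construction, so \ref{E_K dense} is automatic. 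The three cases differ only in whether the successor heights are chosen near $0$ or near $\varphi(x_s)$, which controls whether $\varphi(x_f)$ is $0$, positive with the graph nowhere locally compact, or positive with the graph locally a Cantor set.
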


\begin{proof}
To begin, we recursively construct a particular Cantor set $C \subset [0,1]$.  The set $C$ will be indexed by the set $\omega^{\leq \omega}$ of countable sequences of natural numbers (possibly finite or even empty). 

Recall that $\omega^{\leq \omega} = \bigcup \{\omega^\alpha \colon \alpha \leq \omega\}$, that is, the set of functions with domain either a natural number or $\omega$, and image contained in $\omega$.  If $s \in \omega^\alpha$ then $\alpha$ is the \emph{length} of $s$ and will be denoted by $|s|$.  Given $s \in \omega^{\leq \omega}$ and $i < |s|$ then the $i$-th term of $s$ is $s(i)$.  If $s \in \omega^{\leq \omega}$ and $n \leq |s|$ then $s \restriction n$ denotes the restriction of $s$ to $n$.  We will also use the notation $\omega^{< \omega} = \bigcup \{\omega^n \colon n < \omega\}$.  If $n < \omega$, $s \in \omega^n$ and $i \in \omega$ then $s^\frown i$ denotes the sequence such that $(s^\frown i) \restriction n = s$ and $(s^\frown i)(n) = i$.  Given $s \in \omega^{< \omega}$, its \emph{immediate successors} are the sequences $s^\frown i$ for $i \in \omega$.

We start by recursively defining a countable sequence of points $\{x_s \colon s \in \omega^{< \omega}\} \subset [0,1]$ and a countable sequence of closed intervals $\{I_s \colon s \in \omega^{< \omega}\}$ with the following properties:
\begin{enumerate}[label=(\roman*)]
    \item $x_\emptyset = 0$ and $I_\emptyset = [0,1]$;
    \item For every $s \in \omega^{< \omega}$, $\{x_{s^\frown n} \colon n \in \omega\}$ is a decreasing sequence of real numbers contained in $\inte[\mathbb{R}]{I_s}$ converging to $x_s$; and
    \item Given $s \in \omega^{< \omega}$, assume that $I_s = [x_s,b]$.  Then $I_{s^\frown 0} = [x_{s^\frown 0},\frac{1}{2}(x_{s^\frown 0} + b)]$ and $I_{s^\frown i} = [x_{s^\frown (i+1)},\frac{1}{2}(x_{s^\frown (i+1)} + x_{s^\frown i})]$ for every $i \in \omega$.
\end{enumerate}

See Figure \ref{fig:xs} for an illustration of an example of such points $x_s$ and intervals $I_s$ as described above.

\begin{figure}
    \centering
    \includegraphics[width=4.9in]{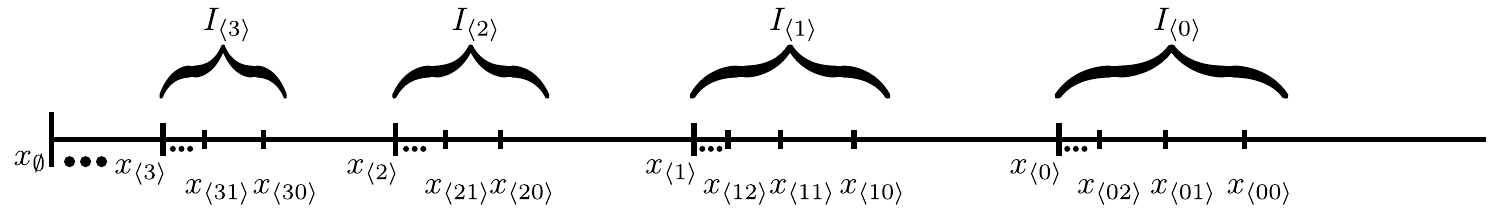}
    \caption{An illustration of a few of the points $x_s$ and intervals $I_s$ from the proof of Proposition \ref{prop:K-fan exists}.}
    \label{fig:xs}
\end{figure}

Now, let $f \in \omega^\omega$.  Notice that the sequence $\{x_{f \restriction n} \colon n \in \omega\}$ is an increasing sequence bounded by $1$; let $x_f$ be the limit of this sequence.  So define $C = \{x_s \colon s \in \omega^{\leq \omega}\}$.  

We argue that $C$ is a Cantor set. First, by our construction, $C$ does not have isolated points. Next, notice that $\{x_s\colon s\in \omega^{<\omega}\}$ is closed under decreasing sequences and every increasing sequence of elements of $\{x_s\colon s\in \omega^{<\omega}\}$ is of the form $x_f$ for some $f\in\omega^{\leq\omega}$. From this it follows that no element of $[0,1]\setminus C$ is the limit of a sequence of elements of $\{x_s\colon s\in \omega^{<\omega}\}$. Then $C$ is closed in $[0,1]$ and thus, compact. Finally, $C$ is zero-dimensional since the collection $\{I_s\cap C\colon s\in\omega^{<\omega}\}$ is easily seen to be a basis of clopen sets of $C$. From this, we conclude that $C$ is compact, metrizable, zero-dimensional and has no isolated points; thus, it is a Cantor set.

We make two remarks regarding convergence in $C$: given a sequence $s_1,s_2,\ldots$ in $\omega^{\leq \omega}$,
\begin{enumerate}[label=(\alph*)]
    \item $x_{s_k} \to x_f$ for some $f \in \omega^\omega$ if and only if $\max\{n \colon f \restriction n = s_k \restriction n\} \to \infty$ as $k \to \infty$; and
    \item $x_{s_k} \to x_s$ for some $s \in \omega^{< \omega}$ if and only if for all but finitely many $k$, either $s_k = s$, or $s_k$ extends $s$, and for such $k$ in the latter case, $s_k(|s|) \to \infty$ as $k \to \infty$.
\end{enumerate}

Fix a bijection $\theta \colon \omega^{< \omega} \to \omega$ with the property that
\begin{enumerate}[resume*]
    \item \label{theta monotone} $\theta(s_1) < \theta(s_2)$ whenever $s_2$ extends $s_1$.
\end{enumerate}
It follows that:
\begin{enumerate}[resume*]
    \item \label{theta estimate} If $s_1,s_2,\ldots \in \omega^{< \omega}$ are pairwise distinct, then $\theta(s_k) \to \infty$ as $k \to \infty$.
\end{enumerate}

We next define an upper semi-continuous function $\varphi \colon C \to [0,1]$.  We begin by recursively defining $\varphi(x_s)$ for each $s \in \omega^{< \omega}$. Let $\varphi(x_\emptyset) = 1$.

Let $s \in \omega^{< \omega}$, and suppose $\varphi(x_s) > 0$ has been defined.  We choose a set $Y_s \subset (0,\varphi(x_s))$ which is an order isomorphic copy of $K_{\theta(s)}$, and choose values $\varphi(x_{s^\frown i})$ for $i < \omega$, with details depending on which of the properties (1), (2), or (3) we wish to achieve:
\begin{itemize}
    \item For (1), we choose $Y_s \subset (0,\min\{\varphi(x_s),2^{-\theta(s)}\})$, and for $i = 0,1,\ldots$ we choose values $\varphi(x_{s^\frown i}) \in Y_s$ in such a way that $\overline{\{\varphi(x_{s^\frown i}) \colon i \geq n\}} = Y_s$ for each $n \in \omega$.
    
    \item For (2), we choose $Y_s \subset (\max\{0,\varphi(x_s) - 2^{-\theta(s)}\},\varphi(x_s))$, and for $i = 0,1,\ldots$ we choose values $\varphi(x_{s^\frown i}) \in Y_s \cup \{\varphi(x_s)\}$ in such a way that $\overline{\{\varphi(x_{s^\frown i}) \colon i \geq n\}} = Y_s \cup \{\varphi(x_s)\}$ for each $n \in \omega$.
    
    \item For (3), we choose $Y_s \subset (0,\min\{\varphi(x_s),2^{-\theta(s)}\})$, and for $i = 0,1,\ldots$ we choose values $\varphi(x_{s^\frown i}) \in Y_s \cup \{\varphi(x_s)\}$ in such a way that $\overline{\{\varphi(x_{s^\frown i}) \colon i \geq n\}} = Y_s \cup \{\varphi(x_s)\}$ for each $n \in \omega$.
\end{itemize}

See Figure \ref{fig:phis} for an illustration in each of these three cases.

\begin{figure}
    \centering
    \includegraphics[width=4.9in]{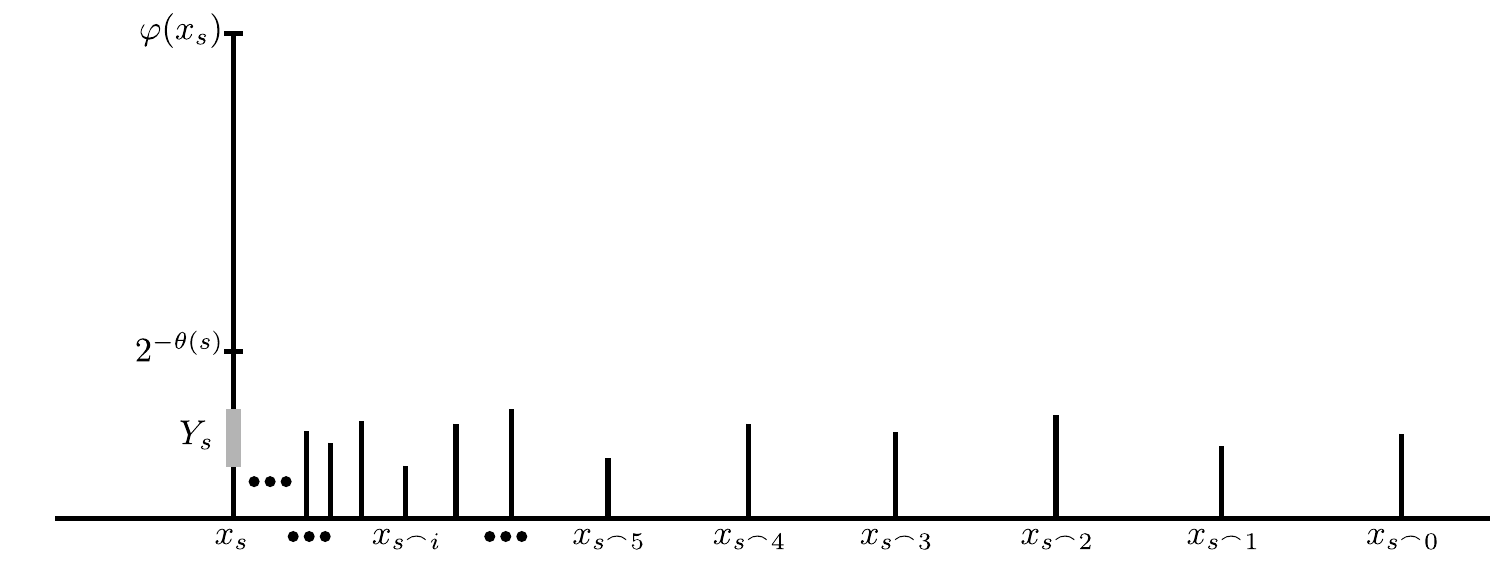}
    \includegraphics[width=4.9in]{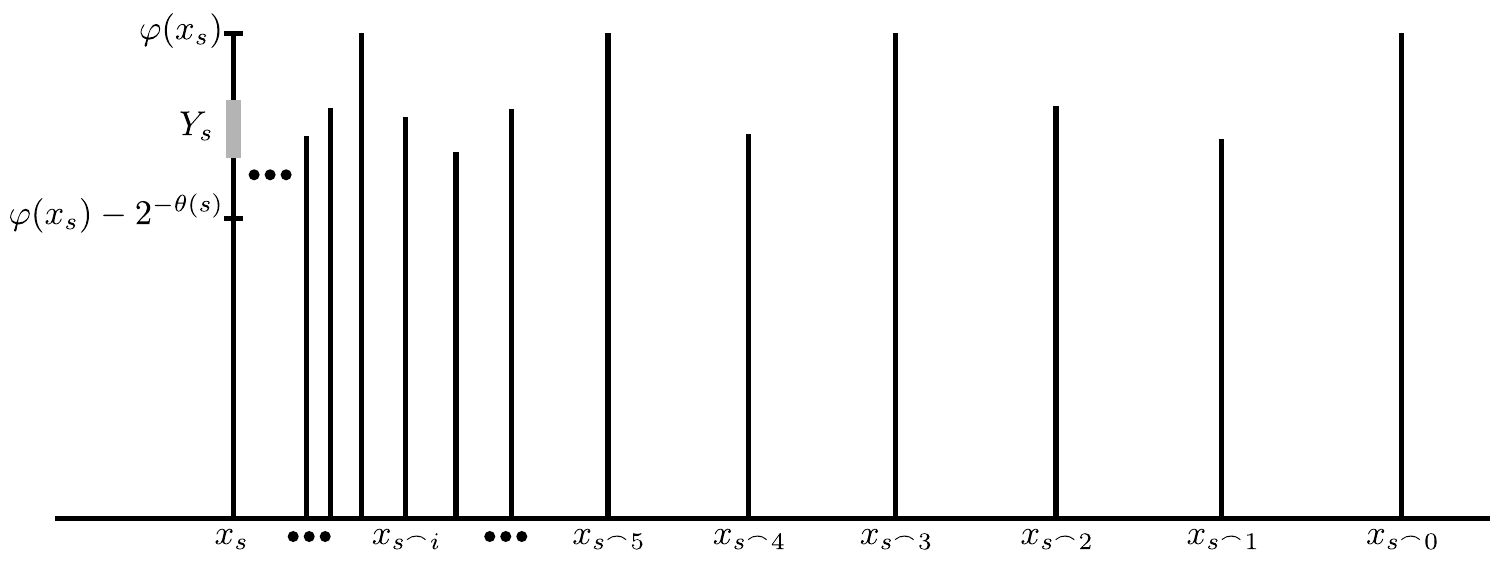}
    \includegraphics[width=4.9in]{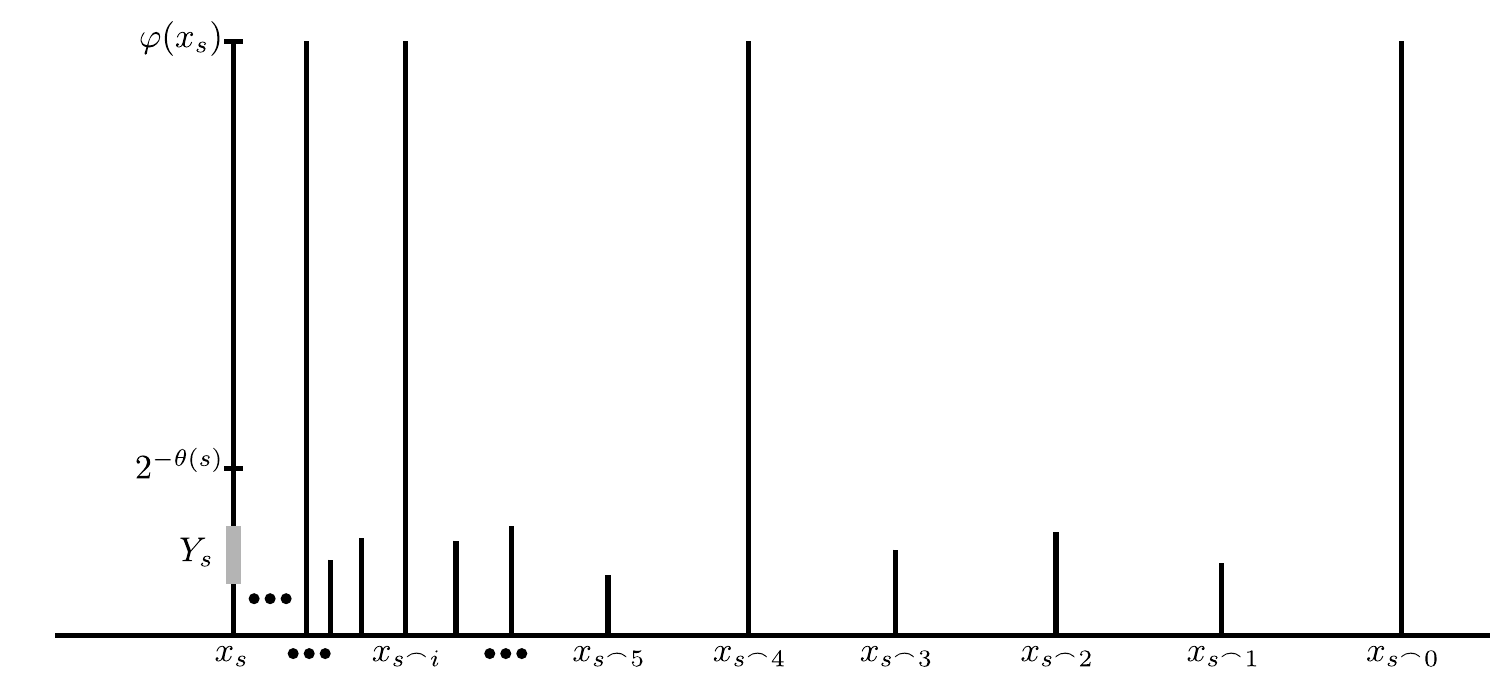}
    \caption{Illustration of the definition of $\varphi$, in cases (1), (2), and (3), respectively.  The height of the vertical line at $x_s$ represents the value of $\varphi(x_s)$.}
    \label{fig:phis}
\end{figure}

We remark that by construction,
\begin{enumerate}[resume*]
\item \label{phi monotone} $\varphi(x_{s_1}) \geq \varphi(x_{s_2})$ whenever $s_2$ extends $s_1$.
\end{enumerate}
We make further observations about $\varphi$ depending on the variant (1), (2), or (3) of the construction:
\begin{enumerate}[resume, label={}]
    \item \begin{enumerate}[label=(\alph{enumi}\arabic*), leftmargin=0pt, parsep=5pt]
        \item \label{phi estimate 1} For (1), for any $\emptyset \neq s \in \omega^{< \omega}$, $\varphi(x_s) < 2^{-\theta(s \restriction |s|-1)}$.
        
        This holds because for any $s \in \omega^{< \omega}$ and any $i \in \omega$, $\varphi(x_{s^\frown i}) \in Y_s$, hence by choice of $Y_s$, $\varphi(x_{s^\frown i}) < 2^{-\theta(s)}$.
        
        \item \label{phi estimate 2} For (2), for any $s_1,s_2 \in \omega^{< \omega}$ such that $s_2$ extends $s_1$, $\varphi(x_{s_1}) - \varphi(x_{s_2}) < 2^{-\theta(s_1)+1}$.
        
        To prove this, note that for any $s \in \omega^{< \omega}$ and any $i \in \omega$, $\varphi(x_{s^\frown i}) \in Y_s \cup \{\varphi(x_s)\}$, hence by choice of $Y_s$, $\varphi(x_{s^\frown i}) > \varphi(x_s) - 2^{-\theta(s)}$.  So
        \begin{align*}
            \varphi(x_{s_1}) - \varphi(x_{s_2}) &= \sum_{n=|s_1|}^{|s_2|-1} \varphi(x_{s_2 \restriction n}) - \varphi(x_{s_2 \restriction n+1}) \\
            & < \sum_{n=|s_1|}^{|s_2|-1} 2^{-\theta(s_2 \restriction n)} \\
            & \leq \sum_{m=0}^{|s_2|-|s_1|-1} 2^{-\theta(s_1)-m}  \textrm{\quad by \ref{theta monotone}} \\
            & < 2^{-\theta(s_1)+1} .
        \end{align*}
        
        \item \label{phi estimate 3} For (3), for any $\emptyset \neq s \in \omega^{< \omega}$, either $\varphi(x_s) < 2^{-\theta(s \restriction |s|-1)}$ or $\varphi(x_s) = \varphi(x_{s \restriction |s|-1})$.
        
        This holds because for any $s \in \omega^{< \omega}$ and any $i \in \omega$, either $\varphi(x_{s^\frown i}) \in Y_s$ or $\varphi(x_{s^\frown i}) = \varphi(x_s)$.  In the former case, the estimate follows by the choice of $Y_s$ as in (1).
    \end{enumerate}
\end{enumerate}

Finally, for $f \in \omega^\omega$, define $\varphi(x_f) = \lim_{n \to \infty} \varphi(x_{f \restriction n})$; this limit exists by \ref{phi monotone}.

\begin{claim}
\label{claim:phi usc}
$\varphi$ is upper semi-continuous.
\end{claim}

\begin{proof}[Proof of Claim \ref{claim:phi usc}]
\renewcommand{\qedsymbol}{\textsquare (Claim \ref{claim:phi usc})}
Suppose $s_1,s_2,\ldots \in \omega^{\leq \omega}$ are such that $x_{s_k}$ converges in $C$ and $\varphi(x_{s_k})$ converges to $y \in [0,1]$.

If $x_{s_k} \to x_f$ for some $f \in \omega^\omega$, then for each $k$ let $n(k)$ be maximal such that $s_k \restriction n(k) = f \restriction n(k)$.  Then $n(k) \to \infty$ as $k \to \infty$.  By \ref{phi monotone} we have $\varphi(x_{s_k}) \leq \varphi(x_{s_k \restriction n(k)}) = \varphi(x_{f \restriction n(k)})$.  The latter quantity converges to $\varphi(x_f)$ by definition of $\varphi$.  Thus $y \leq \varphi(x_f)$ in this case.

If $x_{s_k} \to x_s$ for some $s \in \omega^{< \omega}$, then for all but finitely many $k$, either $s_k = s$ or $s_k$ extends $s$, and for all such $k$ we have $\varphi(x_{s_k}) \leq \varphi(x_s)$ by \ref{phi monotone}.  Thus $y \leq \varphi(x_s)$ in this case as well.
\end{proof}

Let $X = L_0^\varphi / (C \times \{0\})$, and let $p$ represent the point in the quotient corresponding to $C \times \{0\}$.  Then $X$ is a smooth fan with vertex $p$.

For points $\langle x_s,y \rangle \in L_0^\varphi$ with $y > 0$, for convenience we identify $\langle x_s,y \rangle$ with its image in $X$ under the quotient projection.  The set of endpoints $E(X)$ is thus the set $\{\langle x_s,\varphi(x_s) \rangle \colon s \in \omega^{\leq \omega}$ and $\varphi(s) > 0\}$, and given any endpoint $e = \langle x_s,\varphi(x_s) \rangle$ in $X$, the set $pe \smallsetminus \{p\}$ is identified with $\{x_s\} \times (0,\varphi(x_s)]$.

For each $s \in \omega^{< \omega}$, let $e_{\theta(s)} \in X$ be the endpoint $\langle x_s,\varphi(x_s) \rangle$, and let $E_{\mathcal{K}} = \{e_{\theta(s)} \colon s \in \omega^{< \omega}\}$.  For any $f \in \omega^\omega$ such that $\varphi(x_f) > 0$, it is immediate from the definition of $\varphi(x_f)$ that the endpoint $\langle x_f,\varphi(x_f) \rangle$ is in $\overline{E_{\mathcal{K}}}$.  This already confirms \ref{E_K dense} of Definition \ref{defi:K-fan}.  Moreover, we have $\overline{E(X)} = \overline{E_{\mathcal{K}}}$.

\begin{claim}
\label{claim:endpoint trace s}
For each $s \in \omega^{< \omega}$, $\overline{E_{\mathcal{K}}} \cap \left[ \{x_s\} \times (0,\varphi(x_s)) \right] = \{x_s\} \times Y_s$.
\end{claim}

\begin{proof}[Proof of Claim \ref{claim:endpoint trace s}]
\renewcommand{\qedsymbol}{\textsquare (Claim \ref{claim:endpoint trace s})}
Let $s \in \omega^{< \omega}$.  It is clear from the construction that $\{x_s\} \times Y_s \subseteq \overline{\{\langle x_{s^\frown i},\varphi(x_{s^\frown i}) \rangle \colon i \in \omega\}} \subseteq \overline{E_{\mathcal{K}}}$.

For the reverse inclusion, let $s_1,s_2,\ldots$ belong to $\omega^{< \omega}$, and suppose that $x_{s_k} \to x_s$ and $\varphi(x_{s_k}) \to y$ as $k \to \infty$.  We must prove that $y \in Y_s \cup \{0,\varphi(x_s)\}$.

We may assume that $s_k$ extends $s$ for all $k$ and $s_k(|s|) \to \infty$ as $k \to \infty$.  This implies $\theta(s_k \restriction |s|+1) \to \infty$ as $k \to \infty$ by \ref{theta estimate}.  If $|s_k| = |s| + 1$ for infinitely many $k$, then $y \in Y_s \cup \{\varphi(x_s)\}$ by construction.  Assume therefore that (without loss of generality) $|s_k| \geq |s| + 2$ for all $k$.

\begin{itemize}
    \item For (1), observe that by \ref{phi monotone}, and \ref{phi estimate 1},
    \[ \varphi(x_{s_k}) \leq \varphi(x_{s_k \restriction |s|+2}) < 2^{-\theta(s_k \restriction |s|+1)} \to 0  \textrm{ as } k \to \infty .\]
    Therefore $y = 0$.

    \item For (2), by \ref{phi estimate 2} we have $\varphi(x_{s_k \restriction |s|+1}) - \varphi(s_k) < 2^{-\theta(s_k \restriction |s|+1)+1}$.  Since $\varphi(x_{s_k \restriction |s|+1}) \in Y_s \cup \{\varphi(x_s)\}$ for each $k$ and $2^{-\theta(s_k \restriction |s|+1)+1} \to 0$ as $k \to \infty$, we have $y \in Y_s \cup \{\varphi(x_s)\}$.
    
    \item For (3), if for infinitely many $k$ there exists $|s| + 2 \leq n \leq |s_k|$ such that $\varphi(x_{s_k \restriction n}) \in Y_{s_k \restriction n-1}$, then similarly as with (1) we have that $\varphi(x_{s_k}) \to 0$ as $k \to \infty$, so $y=0$.  Assume then that (without loss of generality) for all $k$, for each $|s| + 2 \leq n \leq |s_k|$, $\varphi(x_{s_k \restriction n}) = \varphi(x_{s_k \restriction n-1})$.  It follows that $\varphi(x_{s_k}) = \varphi(x_{s_k \restriction |s|+1}) \in Y_s \cup \{\varphi(x_s)\}$ for all $k$.  Thus $y \in Y_s \cup \{\varphi(x_s)\}$.
\end{itemize}
\end{proof}

\begin{claim}
\label{claim:endpoint trace f}
Let $f \in \omega^{\omega}$.
\begin{itemize}
    \item For (1), $\varphi(x_f) = 0$.
    \item For (2), if $\varphi(x_f) > 0$ then $\overline{E_{\mathcal{K}}} \cap \left[ \{x_f\} \times [0,\varphi(x_f)] \right] = \{\langle x_f,\varphi(x_f) \rangle\}$.
    \item For (3), if $\varphi(x_f) > 0$ then $\overline{E_{\mathcal{K}}} \cap \left[ \{x_f\} \times [0,\varphi(x_f)] \right] = \{\langle x_f,0 \rangle, \langle x_f,\varphi(x_f) \rangle\}$.
\end{itemize}
\end{claim}

\begin{proof}[Proof of Claim \ref{claim:endpoint trace f}]
\renewcommand{\qedsymbol}{\textsquare (Claim \ref{claim:endpoint trace f})}
\
\begin{itemize}
    \item For (1), observe that for each $n \geq 1$, by construction $\varphi(x_{f \restriction n}) \in Y_{f \restriction n-1} \subset (0,2^{-\theta(f \restriction n-1)})$.  Hence $\varphi(x_f) = \lim_{n \to \infty} \varphi(x_{f \restriction n}) = 0$ because $\theta(f \restriction n-1) \to \infty$ as $n \to \infty$ by \ref{theta estimate}.
\end{itemize}

For (2) and (3), assume now that $\varphi(x_f) > 0$.  We have already shown above that $\langle x_f,\varphi(x_f) \rangle$ is in $\overline{E_{\mathcal{K}}}$.

\begin{itemize}
    \item For (3), we also need to show that $\langle x_f,0 \rangle \in \overline{E_{\mathcal{K}}}$.  To see this, we construct a sequence as follows: for each $n \in \omega$, choose $i(n) \in \omega$ such that $\varphi(x_{(f \restriction n)^\frown i(n)}) \in Y_{f \restriction n} \subseteq (0,2^{-\theta(f \restriction n)})$.  Observe that $x_{(f \restriction n)^\frown i(n)} \to x_f$ as $n \to \infty$.  Since $\theta(f \restriction n) \to \infty$ as $n \to \infty$ by \ref{theta estimate}, it follows that $\langle x_f,0 \rangle \in \overline{\{\langle x_{(f \restriction n)^\frown i(n)},\varphi(x_{(f \restriction n)^\frown i(n)}) \rangle \colon n \in \omega\}} \subseteq \overline{E_{\mathcal{K}}}$.
\end{itemize}

For the reverse inclusions, suppose $s_1,s_2,\ldots$ belong to $\omega^{< \omega}$, and suppose that $x_{s_k} \to x_f$ and $\varphi(x_{s_k}) \to y$ as $k \to \infty$.  For each $k$, let $n(k) = \max\{n \in \omega \colon f \restriction n = s_k \restriction n\}$.  Then $n(k) \to \infty$ as $k \to \infty$.

\begin{itemize}
    \item For (2) we must prove that $y = \varphi(x_f)$.  By \ref{theta estimate} and \ref{phi estimate 2}, $\varphi(x_{f \restriction n(k)}) - \varphi(x_{s_k}) < 2^{-\theta(f \restriction n(k))+1} \to 0$ as $k \to \infty$.  Thus
    \[ y = \lim_{k \to \infty} \varphi(x_{s_k}) = \lim_{k \to \infty} \varphi(x_{f \restriction n(k)}) = \varphi(x_f) .\]
    
    \item For (3) we must prove that $y \in \{0,\varphi(x_f)\}$.  First observe that there must exist some $n_0 \in \omega$ such that $\varphi(x_{f \restriction n}) = \varphi(x_{f \restriction n_0})$ for all $n \geq n_0$, since otherwise $\varphi(x_f)$ would be $0$ as in (1).  We may assume without loss of generality that $n(k) \geq n_0$ for all $k$.
    
    If there are infinitely many $k$ such that $\varphi(s_k \restriction m(k)) \in Y_{s_k \restriction m(k)-1}$ for some $n(k) < m(k) \leq |s_k|$, then for such $k$, by \ref{phi monotone} and \ref{phi estimate 3}, $\varphi(x_{s_k}) \leq \varphi(x_{s_k \restriction m(k)}) < 2^{-\theta(s_k \restriction m(k)-1)+1}$, and it follows from \ref{theta estimate} that $y = 0$ in this case.  Otherwise, we have $\varphi(s_k) = \varphi(x_{f \restriction n(k)}) = \varphi(x_f)$ for all but finitely many $k$, hence $y = \varphi(x_f)$.
\end{itemize}
\end{proof}

This completes the proof that $X$ is a $\mathcal{K}$-fan in all three constructions (1), (2), and (3).  Lastly, we identify the spaces $E(X)$ in each case:

\begin{itemize}
    \item For (1), it should be clear from the construction that $E(X) = E_{\mathcal{K}}$ is a countable discrete space.
    
    \item For (2), note that $E(X)$ is completely metrizable and separable.  Any neighborhood of an endpoint contains infinitely many endpoints of the form $\langle x_s,\varphi(x_s)\rangle$ where $s \in \omega^{< \omega}$, and in fact will contain the set $\{x_s\} \times Y_s$ for all but finitely many of them.  Since by construction there are sequences of endpoints converging to these sets $\{x_s\} \times Y_s$, we conclude that $E(X)$ is nowhere locally compact.

    To see that $E(X)$ is zero-dimensional, let $e \in E(X)$, and let $\varepsilon > 0$.  We consider two cases.

    In the first case, $e = \langle x_s,\varphi(x_s)\rangle$ for some $s \in \omega^{< \omega}$.  We may suppose $\varepsilon$ is small enough so that $Y_s \cap (\varphi(x_s) - \varepsilon, 1] = \emptyset$.  Let $U \subset C$ be a closed and open neighborhood of $x_s$ in $C$ which is small enough so that for each $f \in \omega^{\leq \omega}$ with $x_f \in U$, $f \restriction |s| = s$ and
    \[ \sum_{\substack{t \in \omega^{< \omega} \\ x_t \in U}} 2^{-\theta(x_t)} < \varepsilon .\]  It follows that for each $f \in \omega^{\leq \omega}$ with $x_f \in U$, either $\varphi(x_{f \restriction (|s|+1)}) \in Y_s$, in which case $\varphi(x_f) < \varphi(x_s) - \varepsilon$, or $\varphi(x_{f \restriction (|s|+1)}) = \varphi(x_s)$, in which case $\varphi(x_s) - \varepsilon < \varphi(x_f) \leq \varphi(x_s)$.  Thus, if we let $N = U \times (\varphi(x_s) - \varepsilon, \varphi(x_s)]$, then $N \cap E(X)$ is an arbitrarily small closed and open neighborhood of $e$ in $E(X)$.

    In the second case, $e = \langle x_f,\varphi(x_f)\rangle$ for some $f \in \omega^{\omega}$.  Let $n$ be large enough so that
    \[ \sum_{\substack{t \in \omega^{< \omega} \\ t \restriction n = f \restriction n}} 2^{-\theta(x_t)} < \varepsilon .\]
    Let $U \subset C$ be a closed and open neighborhood of $x_f$ in $C$ which is small enough so that for each $g \in \omega^{\leq \omega}$ with $x_g \in U$, $g \restriction n = f \restriction n$.  It follows that for each $g \in \omega^{\leq \omega}$ with $x_g \in U$, $\varphi(x_{f \restriction n}) - \varepsilon < \varphi(x_g) \leq \varphi(x_{f \restriction n})$.  Thus, if we let $N = U \times (\varphi(x_{f \restriction n}) - \varepsilon, \varphi(x_{f \restriction n})]$, then $N \cap E(X)$ is an arbitrarily small closed and open neighborhood of $e$ in $E(X)$.
    
    Therefore $E(X)$ is homeomorphic to the irrational numbers.
    
    \item For (3), again note that $E(X)$ is a separable metric space.  $E(X)$ is not compact, as the vertex of the fan belongs to the closure of $E(X)$.

    We argue that each endpoint has a neighborhood which is homeomorphic to the Cantor set.  Let $e \in E(X)$, and let $f \in \omega^{\leq \omega}$ be such that $e = \langle x_f,\varphi(x_f)\rangle$.  Note that by construction (3), since $Y_s \subset (0,2^{-\theta(s)})$ for each $s \in \omega^{< \omega}$, if $\varphi(x_{f \restriction (n+1)}) \in Y_{f \restriction n}$ for infinitely many $n$, then $\varphi(x_f) = 0$, which is a contradiction since $e \in E(X)$.  Therefore $\varphi(x_f) = \varphi(x_{f \restriction n})$ for some $n$.  Let $U \subset C$ be a closed and open neighborhood of $x_f$ in $C$, and let $\varepsilon > 0$, chosen small enough so that for each $s \in \omega^{< \omega}$ with $x_s \in U$, $s \restriction n = f \restriction n$ and $2^{-\theta(x_s)} < \varphi(x_f) - \varepsilon$.  Consider the neighborhood $N = U \times (\varphi(x_f) - \varepsilon, 1]$ of $e$.  Now\\\\
    \( N \cap E(X) = \{(x_g,\varphi(x_g)): g \in \omega^{\leq \omega} \textrm{, } g \restriction n = f \restriction n \textrm{, and } \varphi(x_g) = \varphi(x_f)\} .\)\\\\
    This latter set is a Cantor set, by the same argument that shows $C$ is a Cantor set.
    
    Thus $E(X)$ is locally compact and has no isolated points.  Therefore $E(X)$ is homeomorphic to the product of the Cantor set and the natural numbers.
\end{itemize}
\end{proof}

Clearly distinct families $\mathcal{K}$ lead to non-homeomorphic fans in Proposition \ref{prop:K-fan exists}.  Hence we conclude the following:

\begin{coro}
There is a family $\mathcal{F}$ of size $\mathfrak{c}$ consisting of pairwise non-homeomorphic endpoint rigid smooth fans.  Further, we can choose $\mathcal{F}$ in such a way that $E(X)$ is homeomorphic to the natural numbers for every $X \in \mathcal{F}$, to the irrationals for every $X \in \mathcal{F}$, or to the product of the Cantor set with the natural numbers for every $X \in \mathcal{F}$.
\end{coro}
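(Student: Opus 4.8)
The plan is to realize $\mathcal F$ as a family of $\mathcal K$-fans, one for each of continuum-many carefully chosen admissible families $\mathcal K$: Proposition~\ref{prop:K-fan exists} supplies a $\mathcal K$-fan with whichever of the three endpoint spaces we want, and Proposition~\ref{prop:K-fan rigid} makes it endpoint rigid, so the base statement follows from any one of the three refinements. Thus it suffices to (A) produce $\mathfrak c$ admissible families $\mathcal K$ that are pairwise distinct up to order-isomorphism of their members, and (B) check that non-order-isomorphic families yield non-homeomorphic fans.

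For (A), I would first exhibit $\mathfrak c$ pairwise non-order-isomorphic nonempty compact subsets of $(0,1)$. Fix once and for all an increasing sequence $q_0<q_1<\cdots$ in $(0,1)$ with $q_n\to c$ for some $c<1$, and for each $n\in\omega$ a nondegenerate closed interval $[u_n,v_n]$ with $q_{n-1}<u_n<v_n<q_n$ (where $q_{-1}:=0$). For $A\subseteq\omega$ put $L_A=\{c\}\cup\{q_n:n\in\omega\}\cup\bigcup_{n\in A}[u_n,v_n]$, a nonempty compact subset of $(0,1)$. For every $A$ the set of isolated points of $L_A$ is exactly $\{q_n:n\in\omega\}$, so any order-preserving autohomeomorphism $g$ of $[0,1]$ with $g(L_A)=L_B$ must fix every $q_n$ and fix $c$; then $L_A\cap(q_{n-1},q_n)\neq\emptyset$ if and only if $L_B\cap(q_{n-1},q_n)\neq\emptyset$, i.e. $A=B$. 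Since $|\mathcal P(\omega)|=\mathfrak c=\aleph_0\cdot\mathfrak c$, I would partition $\{L_A:A\subseteq\omega\}$ into $\mathfrak c$ pairwise disjoint countably infinite subfamilies $\{\mathcal K_\alpha:\alpha<\mathfrak c\}$. Each $\mathcal K_\alpha$ is a legitimate collection as in Definition~\ref{defi:K-fan} (countably infinite, no two members order isomorphic, and $\emptyset\notin\mathcal K_\alpha$ since every $L_A$ is nonempty), and for $\alpha\neq\beta$ the sets of order-isomorphism types $\{[K]:K\in\mathcal K_\alpha\}$ and $\{[K]:K\in\mathcal K_\beta\}$ are disjoint, hence distinct.

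For (B) — which I expect to be the main point, being precisely the ``clearly'' preceding the Corollary — suppose $X$ is a $\mathcal K$-fan and $X'$ a $\mathcal K'$-fan, both obtained from Proposition~\ref{prop:K-fan exists} via the same variant, and $h\colon X\to X'$ is a homeomorphism. Embedding $X$ in the Cantor fan with vertex $p$ the cone point, every point of $X$ other than $p$ has order at most $2$, while $p$ has infinitely many legs and hence order at least $3$; so $p$ is the unique point of $X$ of order $\geq 3$, and likewise $p'$ in $X'$, whence $h(p)=p'$. Then $h$ carries endpoints to endpoints, the arc $pe$ onto the arc $p'h(e)$ for each $e\in E(X)$, and $\overline{E(X)}$ onto $\overline{E(X')}$; since $h|_{pe}\colon pe\to p'h(e)$ is a homeomorphism of arcs taking the $p$-end to the $p'$-end, it induces (orienting each leg with the vertex at $0$) an order isomorphism between $\overline{E(X)}\cap pe\setminus\{p,e\}$ and $\overline{E(X')}\cap p'h(e)\setminus\{p',h(e)\}$. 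Consequently the collection $T(X)$ of order-isomorphism types of the ``traces'' $\overline{E(X)}\cap pe\setminus\{p,e\}$, as $e$ ranges over $E(X)$, is a topological invariant, so $T(X)=T(X')$. Finally, for a $\mathcal K$-fan built in Proposition~\ref{prop:K-fan exists}, properties \ref{en legs} and \ref{non-en legs} of Definition~\ref{defi:K-fan} together with the fact (from the ``moreover'' clauses in variants (2) and (3)) that every endpoint outside $E_{\mathcal K}$ has empty trace give $T(X)\setminus\{[\emptyset]\}=\{[K]:K\in\mathcal K\}$; as every member of $\mathcal K$ is nonempty, this yields $\{[K]:K\in\mathcal K\}=T(X)\setminus\{[\emptyset]\}=T(X')\setminus\{[\emptyset]\}=\{[K']:K'\in\mathcal K'\}$.

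Combining the two parts: fixing any one of the three variants of Proposition~\ref{prop:K-fan exists} and applying it to each $\mathcal K_\alpha$ gives a $\mathcal K_\alpha$-fan $X_\alpha$ with $E(X_\alpha)$ homeomorphic to the natural numbers, the irrationals, or $C\times\omega$ as desired, endpoint rigid by Proposition~\ref{prop:K-fan rigid}; and by (B) a homeomorphism $X_\alpha\approx X_\beta$ would force $\{[K]:K\in\mathcal K_\alpha\}=\{[K]:K\in\mathcal K_\beta\}$, hence $\alpha=\beta$. So $\mathcal F=\{X_\alpha:\alpha<\mathfrak c\}$ is the required family (its size is exactly $\mathfrak c$, the upper bound being automatic). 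The only delicate points are those in (B): that a homeomorphism between two such fans must fix the vertex and therefore act by order-preserving maps on the legs, and that the spurious trace-type $[\emptyset]$ contributed by the non-$E_{\mathcal K}$ endpoints in variants (2) and (3) does not obstruct recovering $\mathcal K$ up to order isomorphism.
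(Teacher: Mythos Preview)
Your proposal is correct and follows exactly the approach the paper has in mind. The paper's own ``proof'' is a single sentence preceding the Corollary: ``Clearly distinct families $\mathcal{K}$ lead to non-homeomorphic fans in Proposition~\ref{prop:K-fan exists}.'' Your argument (B) is precisely a careful unpacking of this ``clearly'' --- the trace invariant $T(X)$ recovers $\{[K]:K\in\mathcal K\}$ up to the spurious empty type --- and your argument (A) supplies the (also omitted) verification that there are $\mathfrak c$ many families with pairwise disjoint order-isomorphism types; both are sound and in the intended spirit.
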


Constructions (2) and (3) of Proposition \ref{prop:K-fan exists} yield examples of smooth fans whose endpoint sets have cardinality $\mathfrak{c}$.  Hence we have the following:

\begin{coro}
There exists a smooth fan with degree of homogeneity $\mathfrak{c}$.
\end{coro}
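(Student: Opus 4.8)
The plan is to obtain this as an immediate consequence of Proposition \ref{prop:K-fan exists} together with an elementary cardinality count. First I would fix a collection $\mathcal{K}$ as in Definition \ref{defi:K-fan}; such a collection exists since, for instance, one may take $K_n$ to be a set of exactly $n$ points, and two finite subsets of $(0,1)$ are order isomorphic precisely when they have the same cardinality. Applying variant (2) (or variant (3)) of Proposition \ref{prop:K-fan exists} then produces an endpoint rigid smooth fan $X$ whose endpoint set $E(X)$ is homeomorphic to the irrationals (respectively, to the product of the Cantor set with the natural numbers); in either case $|E(X)| = \mathfrak{c}$.

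Next I would observe that endpoint rigidity forces every point of $E(X)$ to be a fixed point of every element of $\mathcal{H}(X)$, so the orbit of each $e \in E(X)$ under the action of $\mathcal{H}(X)$ is the singleton $\{e\}$. In particular, distinct endpoints lie in distinct orbits, so the number of orbits is at least $|E(X)| = \mathfrak{c}$. Since $X$ is separable and metric we also have $|X| \leq \mathfrak{c}$, and hence the number of orbits is at most $\mathfrak{c}$. Combining these two estimates shows that the homogeneity degree of $X$ equals $\mathfrak{c}$, as desired.

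I do not anticipate any genuine difficulty: the substantive work has already been carried out in Proposition \ref{prop:K-fan exists}, and what remains is only the bookkeeping above. The sole minor points to verify are the existence of a suitable family $\mathcal{K}$ (routine, as noted) and the fact that the endpoint spaces appearing in variants (2) and (3) have cardinality $\mathfrak{c}$ (standard, since both the irrationals and the product of the Cantor set with the natural numbers are uncountable separable metric spaces). Thus the only ``obstacle'' is to make sure the reader is pointed to the right variants of the preceding proposition.
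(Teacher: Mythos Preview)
Your proposal is correct and follows essentially the same approach as the paper: the paper simply remarks that constructions (2) and (3) of Proposition~\ref{prop:K-fan exists} produce smooth fans with $|E(X)|=\mathfrak{c}$, leaving the orbit count implicit, while you spell out the easy details (endpoint rigidity via Proposition~\ref{prop:K-fan rigid} makes each endpoint its own orbit, and $|X|\le\mathfrak{c}$ gives the upper bound). One tiny quibble: the phrase ``uncountable separable metric spaces'' does not by itself guarantee cardinality $\mathfrak{c}$; it is cleaner to note directly that the irrationals and $C\times\omega$ each have cardinality $\mathfrak{c}$.
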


A straightforward modification of the constructions from Proposition \ref{prop:K-fan exists} yields the next two results.

\begin{prop}
\label{prop:K-fan exists generalization 1}
Let $X$ be a smooth fan with vertex $p$.  Let $\mathcal{K}$ be a collection as in Definition \ref{defi:K-fan}.  There exists a $\mathcal{K}$-fan $X'$ such that $X$ embeds in $X'$, and (treating $X$ as a subset of $X'$) $E(X) \subset E(X')$ and $X \subset \overline{E(X') \smallsetminus E(X)}$. Moreover, if $E_{\mathcal{K}} = \{e_n \colon n \in \omega\}$ is as in Definition \ref{defi:K-fan}, then $X'$ can be constructed so that either:
\begin{enumerate}[label=(\arabic*)]
    \item $E(X') \smallsetminus E(X)$ is homeomorphic to the natural numbers and $E(X') \smallsetminus E(X) = E_{\mathcal{K}}$; or
    \item $E(X') \smallsetminus E(X)$ is homeomorphic to the irrationals; or
    \item $E(X') \smallsetminus E(X)$ is homeomorphic to the product of the Cantor set with the natural numbers.
\end{enumerate}
\end{prop}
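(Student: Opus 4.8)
The plan is to mimic the construction in Proposition \ref{prop:K-fan exists} almost verbatim, but starting from a USC function that already represents $X$ rather than from the constant function $\varphi(x_\emptyset) = 1$. By Lemma \ref{lemma:smooth fan USC}, fix a non-degenerate USC function $\psi \colon C_0 \to [0,1]$ on a Cantor set $C_0$ with $L_0^\psi/(C_0 \times \{0\}) \approx X$ and $G_0^\psi \approx E(X)$. The idea is to embed $C_0$ as a nowhere dense subset of a larger Cantor set $C$ and to extend $\psi$ to a USC function $\varphi$ on $C$ so that: (a) $\varphi \restriction C_0 = \psi$, so that $L_0^\psi/(C_0 \times \{0\})$ sits inside $X' := L_0^\varphi/(C \times \{0\})$ as the required subfan; (b) on the complement of $C_0$, $\varphi$ is built by the recursive scheme of cases (1), (2), or (3), producing legs whose endpoint-traces realize the sets $K_{\theta(s)}$; and (c) every leg of $X$ lies in the closure of the new legs, i.e. $X \subset \overline{E(X') \smallsetminus E(X)}$.

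Concretely, I would re-run the recursion on $\omega^{<\omega}$ but now have the tree ``hang off'' of $C_0$: choose a countable dense subset $\{c_s : s \in \omega^{<\omega}\}$ of $C_0$ (where the indexing respects the tree order in the sense of property \ref{theta monotone}), and for each $s$ attach, near $c_s$ and just to one side of it, a shrinking copy of the construction from Proposition \ref{prop:K-fan exists} with ``root height'' $\psi(c_s)$ playing the role of $\varphi(x_s)$. That is, replace the line ``Let $\varphi(x_\emptyset) = 1$'' and the inductive step by: set $\varphi$ equal to $\psi$ on $C_0$, and for each $s$ with $\psi(c_s) > 0$ choose $Y_s \subset (0, \psi(c_s))$ order isomorphic to $K_{\theta(s)}$ and assign heights $\varphi(x_{s^\frown i})$ exactly as in case (1), (2), or (3) of that proof (with $\psi(c_s)$ in place of $\varphi(x_s)$). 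The auxiliary points $x_{s^\frown i}$ converge down to $c_s$, and the same estimates \ref{phi estimate 1}--\ref{phi estimate 3} together with Claims \ref{claim:phi usc}--\ref{claim:endpoint trace f} go through unchanged, since they only used the recursive scheme locally within each subtree and the shrinking factors $2^{-\theta(s)}$. One must additionally check upper semi-continuity of $\varphi$ at points of $C_0 \smallsetminus \{c_s : s\}$ and at limits of the $c_s$; this follows because $\psi$ is USC and the heights of the attached legs over a small neighborhood of $c_s$ are bounded above by $\psi(c_s) + \varepsilon$ for $\varepsilon$ small (using \ref{phi monotone} and the estimates), so no new upper jumps are created.

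The conditions of Definition \ref{defi:K-fan} for $X'$ are then verified as before: \ref{en legs} holds because the trace of $\overline{E(X')}$ on the leg $pe_{\theta(s)}$ is $\{c_s\} \times Y_s$ (Claim \ref{claim:endpoint trace s}) which is order isomorphic to $K_{\theta(s)}$; \ref{non-en legs} holds for the legs coming from $E(X)$ because on those legs $\overline{E(X')}$ traces out (a copy of) $\overline{E(X)}$ together with, at worst, the endpoint and vertex, and we are free to assume — by choosing $\mathcal{K}$ to avoid, or rather by noting that the sets $K_n$ are ``new'' — that none of these traces is order isomorphic to any $K_n$; here one should be slightly careful and instead argue directly, as in the original proof, that the $E_{\mathcal K}$-legs are distinguished by property \ref{en legs} alone, since the $K_n$ are pairwise non-order-isomorphic and each occurs exactly once. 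For \ref{E_K dense}, every leg $pe$ with $e \in E(X') \smallsetminus E_{\mathcal K}$ either comes from $E(X)$, in which case $c_s \to$ that leg's base for a sequence of $s$ and hence $\overline{E_{\mathcal K}}$ meets it (this is precisely where $X \subset \overline{E(X')\smallsetminus E(X)}$ gets used), or is one of the $x_f$-legs, handled by Claim \ref{claim:endpoint trace f}. Finally, the identification of $E(X') \smallsetminus E(X)$ in cases (1), (2), (3) is word-for-word the computation at the end of the proof of Proposition \ref{prop:K-fan exists}, since $E(X') \smallsetminus E(X)$ is exactly the endpoint set produced by the attached construction.

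The main obstacle I anticipate is bookkeeping rather than a genuine difficulty: one must arrange the indexing $s \mapsto c_s$ so that the $c_s$ are dense in $C_0$, respect the tree order compatibly with $\theta$, and have the attached legs placed so that (i) the resulting set $C$ is genuinely a Cantor set (nowhere dense-ness of $C_0$ in $C$, no new isolated points, closedness), and (ii) distinct attached subtrees do not interfere — their auxiliary points must be separated and must accumulate only where intended. This is entirely analogous to the careful construction of $C$ and the intervals $I_s$ in Proposition \ref{prop:K-fan exists}, and I would simply say so and refer back to that construction, carrying out only the modifications needed to graft it onto $C_0$. The rigidity of $X'$ then follows immediately from Proposition \ref{prop:K-fan rigid}.
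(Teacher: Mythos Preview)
Your overall plan---graft the recursive construction of Proposition \ref{prop:K-fan exists} onto a USC representation of $X$---matches the paper's sketch in spirit, but there is a real gap in how you attach the tree. In your scheme the nodes $c_s$ lie in $C_0$, so the legs over them are legs of $X$; consequently the points $\langle c_s, \psi(c_s)\rangle$ that you identify as $e_{\theta(s)}$ belong to $E(X)$, whereas the proposition (case (1) especially) requires $E_{\mathcal K} = E(X') \smallsetminus E(X)$. Worse, the trace of $\overline{E(X')}$ on the leg at $c_s$ picks up $\{c_s\}\times Y_s$ from the accumulation of the $x_{s^\frown i}$, so a leg with endpoint in $E(X)\subset E(X') \smallsetminus E_{\mathcal K}$ can have trace order isomorphic to $K_{\theta(s)}$, violating condition \ref{non-en legs}. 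Your two proposed remedies do not work: $\mathcal K$ is \emph{given}, not chosen, and condition \ref{non-en legs} is an explicit requirement of Definition \ref{defi:K-fan}, not something that can be bypassed via uniqueness in \ref{en legs}. A related casualty is the inclusion $X \subset \overline{E(X') \smallsetminus E(X)}$: the new endpoints you attach near $c_s$ have heights confined to $Y_s$ (or $Y_s \cup \{\psi(c_s)\}$), and nothing in your construction forces them to accumulate onto every point of every leg of $X$.

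The paper's fix is simple and resolves all three issues at once: do not root the tree at points of $C_0$. Instead, index the new legs by $\omega^{\leq\omega} \smallsetminus \{\emptyset\}$ and arrange the \emph{length-one} new legs so that their endpoints form a set whose closure contains $X$. Then $E_{\mathcal K}$ consists of the endpoints over the new points $x_s$ with $s\in\omega^{<\omega}\smallsetminus\{\emptyset\}$, automatically disjoint from $E(X)$; the trace of $\overline{E(X')}$ on every leg of $X$ is the full open interval, which is non-compact and hence not order isomorphic to any compact $K_n \subset (0,1)$, so \ref{non-en legs} holds there trivially; and $X \subset \overline{E_{\mathcal K}}$ holds by construction. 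From the length-one legs onward the recursion proceeds verbatim as in Proposition \ref{prop:K-fan exists}.
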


\begin{prop}
\label{prop:K-fan exists generalization 2}
Let $X$ be a smooth fan with vertex $p$, and assume that $p \notin \overline{E(X)}$.  Let $\mathcal{K}$ be a collection as in Definition \ref{defi:K-fan}.  There exists a $\mathcal{K}$-fan $X'$ such that $X$ embeds in $X'$, and (treating $X$ as a subset of $X'$) $E(X)$ is a clopen subset of $E(X')$. Moreover, if $E_{\mathcal{K}} = \{e_n \colon n \in \omega\}$ is as in Definition \ref{defi:K-fan}, then $X'$ can be constructed so that either:
\begin{enumerate}[label=(\arabic*)]
    \item $E(X') \smallsetminus E(X)$ is homeomorphic to the natural numbers and $E(X') \smallsetminus E(X) = E_{\mathcal{K}}$; or
    \item $E(X') \smallsetminus E(X)$ is homeomorphic to the irrationals; or
    \item $E(X') \smallsetminus E(X)$ is homeomorphic to the product of the Cantor set with the natural numbers.
\end{enumerate}
\end{prop}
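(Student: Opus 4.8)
The plan is to re-run the construction of Proposition~\ref{prop:K-fan exists}, replacing the single ``root leg'' there by a thin truncation of $X$ lying below the endpoints of $X$, onto which the newly created endpoints will be dense. Using Lemma~\ref{lemma:smooth fan USC}, write $X = L_0^\psi/(C_0\times\{0\})$ for a non-degenerate USC function $\psi\colon C_0\to[0,1]$ on a Cantor set $C_0$, with $p$ the image of $C_0\times\{0\}$. Since $p\notin\overline{E(X)}$, there is $\delta\in(0,1)$ with $\psi(x)\geq\delta$ whenever $\psi(x)>0$; put $D=\{x\in C_0\colon\psi(x)>0\}$, which is compact, fix $\epsilon\in(0,\delta)$, and let $T_\epsilon=\{\langle x,t\rangle\colon x\in D,\ 0\leq t\leq\epsilon\}/(D\times\{0\})\subseteq X$ be the corresponding truncated sub-fan. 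All the new endpoints will lie at heights below $\epsilon$, whereas $\overline{E(X)}$ lies at heights at least $\delta$.

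Next I would enlarge $C_0$ to a Cantor set $C'\supseteq C_0$ by fixing countably many pairwise disjoint open intervals, each disjoint from $C_0$, with diameters tending to $0$ and clustering densely onto $D$, and inserting into them rescaled copies of the Cantor-set-and-function data from the proof of Proposition~\ref{prop:K-fan exists}, with all inserted heights in $(0,\epsilon)$. I would index the inserted ``seed'' legs (those over the points at the first level) together with all of their descendants by $\omega^{<\omega}$, using a tree-monotone bijection $\theta$ exactly as in that proof, so that the $n$-th inserted leg has trace $\overline{E(X')}\cap pe_n\smallsetminus\{p,e_n\}$ order isomorphic to $K_n$; and I would arrange the seed legs so that their endpoints are dense in $T_\epsilon$, so that in particular $T_\epsilon\subseteq\overline{E_{\mathcal K}}$. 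Setting $\psi'\restriction C_0=\psi$ and letting $\psi'$ take the inserted values on $C'\smallsetminus C_0$, one checks that $\psi'$ is USC and non-degenerate --- the point being that the inserted heights and all their limit values are $\leq\epsilon<\delta\leq\psi$ on $D$, and that the inserted points accumulate in $C_0$ only onto $D$. Then $X' = L_0^{\psi'}/(C'\times\{0\})$ is a smooth fan by Lemma~\ref{lemma:smooth fan USC}, and $X = L_0^{\psi'\restriction C_0}/(C_0\times\{0\})$ is a closed copy of $X$ inside $X'$.

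For the verification, first note that nothing is attached at an endpoint of $X$, so $E(X')\cap X=E(X)$; since the new endpoints lie at heights below $\epsilon$ whereas $E(X)$ and its closure lie at heights at least $\delta$, the set $E(X)$ is clopen in $E(X')$. Next, parts \ref{en legs} and \ref{E_K dense} of Definition~\ref{defi:K-fan}, and part \ref{non-en legs} for the \emph{new} non-$E_{\mathcal K}$ endpoints (which only occur in variants (2) and (3)), all follow by applying the analysis of the proof of Proposition~\ref{prop:K-fan exists} inside each seed's subtree; each such subtree occupies one of the chosen intervals, which is isolated from the rest of $C'$, so no interference occurs. The genuinely new point is part \ref{non-en legs} for the original endpoints: if $e=\langle x,\psi(x)\rangle\in E(X)$, then $\{x\}\times[0,\epsilon]\subseteq T_\epsilon\subseteq\overline{E_{\mathcal K}}\subseteq\overline{E(X')}$ and $\epsilon<\delta\leq\psi(x)$, so $pe\cap\overline{E_{\mathcal K}}\supseteq\{x\}\times[0,\epsilon]$ (which gives part \ref{E_K dense} for $e$ as well), and the trace $\overline{E(X')}\cap pe\smallsetminus\{p,e\}$ contains $\{x\}\times(0,\epsilon]$, hence accumulates at $p$; identified with a subset of $(0,1)$ it is then not compact, so it is not order isomorphic to any $K_n\subseteq(0,1)$. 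Hence $X'$ is a $\mathcal K$-fan. Finally, $E(X')\smallsetminus E(X)$ is exactly the set of inserted endpoints, which by the same case analysis as in Proposition~\ref{prop:K-fan exists} is homeomorphic to the natural numbers in variant (1), to the irrationals in variant (2), and to the product of the Cantor set with the natural numbers in variant (3).

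I expect the main obstacle to be not any single conceptual step but the simultaneous bookkeeping: placing the inserted seed legs so that at once (i) their endpoints are dense in $T_\epsilon$, (ii) each member of $\mathcal K$ is realized exactly once as a trace, (iii) all inserted material stays below level $\delta$, and (iv) $\psi'$ remains USC (in particular the inserted points must cluster onto $D$ but not onto $C_0\smallsetminus D$); and, in variants (2) and (3), re-examining the local homeomorphism type of $E(X')\smallsetminus E(X)$ at a seed endpoint, near which other seed endpoints corresponding to the same point of $D$ accumulate. Modulo this bookkeeping, the argument runs parallel to those of Propositions~\ref{prop:K-fan exists} and \ref{prop:K-fan exists generalization 1}.
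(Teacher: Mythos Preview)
Your proposal is correct and follows essentially the same approach as the paper's (very brief) sketch: both start from $X$ and graft on the tree of new legs from Proposition~\ref{prop:K-fan exists} indexed by $\omega^{\leq\omega}\smallsetminus\{\emptyset\}$, arranging the first-level legs to converge to a subset of $X$ disjoint from $E(X)$ whose trace on each original leg is not order isomorphic to any $K_n$. Your explicit choice of the truncation $T_\epsilon$ (enabled by the hypothesis $p\notin\overline{E(X)}$) is a natural realization of the paper's unspecified ``subset'', and your observation that each original trace then contains the interval $(0,\epsilon]$ and is therefore non-compact is in fact slightly sharper than the paper's phrasing, since what must fail to be order isomorphic to a $K_n$ is the \emph{full} trace $\overline{E(X')}\cap pe\smallsetminus\{p,e\}$, which also includes the old contribution $\overline{E(X)}\cap pe$.
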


To obtain these last two results, one can start with the fan $X$, then add more legs, one for each $s \in \omega^{\leq \omega} \smallsetminus \{\emptyset\}$, starting with a sequence of legs indexed by the sequences in $\omega^{< \omega}$ of length one, constructed in such a way that they converge to $X$ (for Proposition \ref{prop:K-fan exists generalization 1}) or to a subset of $X$ disjoint from $E(X)$ and whose intersection with each leg is not order isomorphic to any element of $\mathcal{K}$ (for Proposition \ref{prop:K-fan exists generalization 2}), then proceed with the rest of the construction as in Proposition \ref{prop:K-fan exists}.

We next introduce one simple construction on fans which we can use to generate more endpoint rigid fans.

\begin{defi}
Given finitely many smooth fans $X_1,\ldots,X_m$ whose vertices are $p_1,\ldots,p_m$, the \emph{sum} of these fans is
\[ \sum_{i=1}^m X_i = \left( \bigsqcup_{i=1}^m X_i \right) / \{p_1,\ldots,p_m\} .\]

Given countably infinitely many smooth fans $X_1,X_2,\ldots$ with vertices $p_1,p_2,\ldots$, consider the one point compactification $\bigsqcup_{i=1}^\infty X_i \cup \{\infty\}$ of $\bigsqcup_{i=1}^\infty X_i$.  The \emph{sum} of these fans is
\[ \sum_{i=1}^\infty X_i = \left(\bigsqcup_{i=1}^\infty X_i \cup \{\infty\} \right) / (\{p_1,p_2,\ldots\} \cup \{\infty\}) .\]
In both cases, observe that $\sum_i X_i$ is a smooth fan, and $E(\sum_i X_i) \approx \bigsqcup_i E(X_i)$.
\end{defi}

\begin{prop}
\label{prop:sum K-fan}
Suppose that $\mathcal{K}_1,\mathcal{K}_2,\ldots$ is a finite or countably infinite sequence of collections as in Definition \ref{defi:K-fan}, and suppose that no two of these collections contain sets which are order isomorphic to each other.  For each $i$, let $X_i$ be a $\mathcal{K}_i$-fan, and let $\mathcal{K} = \bigcup_i \mathcal{K}_i$.  Then $\sum_i X_i$ is a $\mathcal{K}$-fan.
\end{prop}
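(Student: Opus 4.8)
The plan is to set $E_{\mathcal{K}} = \bigcup_i E_{\mathcal{K}_i}$, where $E_{\mathcal{K}_i} \subseteq E(X_i)$ is a set witnessing that $X_i$ is a $\mathcal{K}_i$-fan, and to verify conditions \ref{en legs}, \ref{non-en legs}, and \ref{E_K dense} of Definition \ref{defi:K-fan} for $X := \sum_i X_i$ by reducing each, one leg at a time, to the corresponding property of the appropriate summand. First I would check that $\mathcal{K} = \bigcup_i \mathcal{K}_i$ is again a collection as in Definition \ref{defi:K-fan}: the $\mathcal{K}_i$ are pairwise disjoint, $\mathcal{K}$ is countably infinite, and no two of its members are order isomorphic --- within a fixed $\mathcal{K}_i$ by the hypothesis on $\mathcal{K}_i$, and across distinct indices by the hypothesis that no two of the collections contain an order-isomorphic pair. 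I would then fix compatible enumerations, so that the distinguished endpoint $e_n$ of $X$ is the member of $E_{\mathcal{K}_i}$ realizing $K_n$, where $i$ is the unique index with $K_n \in \mathcal{K}_i$.

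Next I would establish the structural facts about the sum. Write $p$ for the vertex of $X$ and regard each $X_i$ as a subcontinuum of $X$ with $p_i$ identified to $p$; then $E(X) = \bigsqcup_i E(X_i)$, so $E_{\mathcal{K}} \cap X_i = E_{\mathcal{K}_i}$, and for $e \in E(X_i)$ the arc $pe$ in $X$ coincides with the arc $p_ie$ in $X_i$. Moreover $\bigcup_{j \neq i} X_j$ is closed in $X$ --- a finite union of compacta in the finite case, and in the countably infinite case because its preimage under the quotient map from $\bigsqcup_j X_j \cup \{\infty\}$ has open complement $X_i \smallsetminus \{p_i\}$ --- so every point of $X_i \smallsetminus \{p\}$ has a neighbourhood in $X$ contained in $X_i$. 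Consequently, near any point of $pe \smallsetminus \{p\}$, $X$ together with its endpoint set agrees with $X_i$ together with $E(X_i)$, which yields, for each $e \in E(X_i)$,
\[ \overline{E(X)} \cap \bigl(pe \smallsetminus \{p\}\bigr) = \overline{E(X_i)} \cap \bigl(p_ie \smallsetminus \{p_i\}\bigr) \quad\text{and}\quad \overline{E_{\mathcal{K}}} \cap \bigl(pe \smallsetminus \{p\}\bigr) = \overline{E_{\mathcal{K}_i}} \cap \bigl(p_ie \smallsetminus \{p_i\}\bigr), \]
where the closures on the left are taken in $X$ and those on the right in $X_i$.

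Granting these identities, conditions \ref{en legs} and \ref{E_K dense} follow at once. For \ref{en legs}, a distinguished $e_n$ lies in some $E_{\mathcal{K}_i}$, so $\overline{E(X)} \cap pe_n \smallsetminus \{p,e_n\} = \overline{E(X_i)} \cap p_ie_n \smallsetminus \{p_i,e_n\}$, which is order isomorphic to $K_n$ by \ref{en legs} for $X_i$. For \ref{E_K dense}, if $e \in E(X) \smallsetminus E_{\mathcal{K}}$, then $e \in E(X_i) \smallsetminus E_{\mathcal{K}_i}$ for some $i$, and $\overline{E_{\mathcal{K}_i}} \cap p_ie \smallsetminus \{p_i\}$ is nonempty by \ref{E_K dense} for $X_i$ and is contained in $\overline{E_{\mathcal{K}}} \cap pe \smallsetminus \{p\}$.

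The step I expect to be the main obstacle is \ref{non-en legs}. Given $e \in E(X) \smallsetminus E_{\mathcal{K}}$, say $e \in E(X_i) \smallsetminus E_{\mathcal{K}_i}$, the first displayed identity and \ref{non-en legs} for $X_i$ give that $L := \overline{E(X)} \cap pe \smallsetminus \{p,e\}$ is order isomorphic to no member of $\mathcal{K}_i$; what remains is to exclude that $L$ is order isomorphic to some member of $\mathcal{K}_j$ with $j \neq i$. The delicate point is that $L$ itself need not belong to any of the collections, so the hypothesis on the $\mathcal{K}_i$'s does not bear on $L$ directly; one really needs to know that the trace of $\overline{E(X_i)}$ on each non-distinguished leg of $X_i$ is order isomorphic to no member of $\bigcup_{j \neq i} \mathcal{K}_j$. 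For the $\mathcal{K}_i$-fans to which the proposition is applied this exclusion is automatic --- for instance, the fans of Proposition \ref{prop:K-fan exists} have $\overline{E(X_i)} \cap p_ie \smallsetminus \{p_i,e\} = \emptyset$ on every non-distinguished leg, so that $L = \emptyset$ and \ref{non-en legs} holds as soon as $\emptyset \notin \mathcal{K}$ --- and in general one should carry this property of the summands among the hypotheses, after which the reduction above completes the argument.
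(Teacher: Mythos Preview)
The paper states this proposition without proof, so there is nothing to compare against directly. Your approach --- identifying each summand with a subcontinuum of $X$, observing that $X_i \smallsetminus \{p\}$ is open in $X$, and reducing the closure computations on each leg to the corresponding summand --- is correct and is the natural way to argue. Conditions \ref{en legs} and \ref{E_K dense} follow exactly as you say.

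Your concern about \ref{non-en legs} is well founded, and in fact it is a gap in the \emph{statement} of the proposition rather than in your argument. The hypothesis that no member of $\mathcal{K}_i$ is order isomorphic to any member of $\mathcal{K}_j$ (for $i \neq j$) says nothing about the trace $\overline{E(X_i)} \cap p_ie \smallsetminus \{p_i,e\}$ on a non-distinguished leg of $X_i$; that trace is not assumed to lie in any $\mathcal{K}_i$, so nothing prevents it from being order isomorphic to a member of some other $\mathcal{K}_j$. One can construct a concrete counterexample by taking a $\mathcal{K}_1$-fan $X_1$ (say via Proposition~\ref{prop:K-fan exists generalization 1}) which has a non-distinguished leg whose trace happens to realize an element of $\mathcal{K}_2$. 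Your diagnosis that the proposition holds for the fans actually built in Proposition~\ref{prop:K-fan exists} --- since there every non-distinguished leg has empty trace --- is also correct, and this is presumably what the authors had in mind; but as written, the proposition requires either that additional hypothesis on the summands or a restriction to those specific constructions.
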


\section{Endpoint sets in smooth fans}
\label{sec:endpoint sets}

In this section we investigate which spaces may be realized as the set of endpoints of a smooth fan, or of an endpoint rigid smooth fan.  To start with, it is known from \cite[Theorem 7.5]{lelek} that the set of endpoints of any smooth fan is a Polish space.

\emph{Complete Erd\H{o}s space} is the subspace
\[ \Ec = \left\{ \langle x_n \rangle_{n \in \omega} \in \ell^2 \colon \forall i \in \omega, \, x_i \in \{0\} \cup \{1/n \colon n \in \mathbb{N}\} \right\} \]
of $\ell^2$, where $\ell^2$ is the Hilbert space of square-summable sequences of real numbers.  Complete Erd\H{o}s space was introduced by Erd\H{o}s in 1940 in \cite{er} as an example of totally disconnected and non-zero-dimensional space.  In \cite{k-o-t} it was proved that the Lelek fan has its set of endpoints homeomorphic to $\Ec$.

\begin{lemma}
\label{lemma:E(X) Ec}
For every smooth fan $X$ the set $E(X)$ is homeomorphic to a closed subset of $\Ec$.
\end{lemma}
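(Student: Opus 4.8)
The plan is to produce an explicit embedding of $E(X)$ into $\Ec$ and show its image is closed. By Lemma \ref{lemma:smooth fan USC}, fix a non-degenerate USC function $\varphi \colon C \to [0,1]$ with $L_0^\varphi/(C \times \{0\}) \approx X$ and $G_0^\varphi \approx E(X)$; so it suffices to embed $G_0^\varphi$ as a closed subset of $\Ec$. First I would recall the known description of $\Ec$ as (up to homeomorphism) the ``graph'' space
\[ \mathfrak{E}_c' = \{ \langle x, y \rangle \in C \times [0,\infty) \colon y \leq \psi(x) \} \cap (C \times (0,\infty)) \]
for a suitable lower semicontinuous $\psi$ — more precisely, the standard fact (from the Dijkstra--van Mill theory, e.g.\ \cite{dijkstra-vm-memAMS}) that a space is homeomorphic to $\Ec$ iff it is a cohesive, almost zero-dimensional Polish space witnessed by a certain Lelek-type function, and that $G_0^\varphi$ for a USC $\varphi$ always embeds this way. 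Concretely, $G_0^\varphi$ carries the topology generated by the restrictions of the product topology on $C \times \mathbb{R}$ together with the finer topology in which $\varphi$ becomes continuous; this ``two-topology'' structure is exactly what places it inside $\Ec$.

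The key steps, in order, are: (i) represent $E(X)$ as $G_0^\varphi$ via Lemma \ref{lemma:smooth fan USC}; (ii) observe $G_0^\varphi$ is almost zero-dimensional: each point $\langle x, \varphi(x)\rangle$ has arbitrarily small neighborhoods of the form $(U \times (\varphi(x) - \varepsilon, \infty)) \cap G_0^\varphi$ with $U$ clopen in $C$, and such a set has boundary contained in $\{x' : \varphi(x') = \varphi(x) - \varepsilon \text{ or } x' \in \partial U\}$ — choosing $\varepsilon$ outside the countable set of values $\varphi$ attains on a countable dense set, one gets a closed-and-$C$-separated basis, which is the witness function side; (iii) invoke the characterization of $\Ec$ (Dijkstra--van Mill): a Polish, almost zero-dimensional space with a witness function whose associated ``cohesion'' holds everywhere is homeomorphic to $\Ec$, and more importantly for us, \emph{every} Polish almost zero-dimensional space embeds as a closed subspace of $\Ec$. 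This last statement is in fact the cleanest route: I would cite the theorem of Dijkstra--van Mill that $\Ec$ is the unique ``$\Ec$-universal'' space in the sense that the closed subspaces of $\Ec$ are precisely the Polish almost zero-dimensional spaces. Then the proof reduces to: $E(X)$ is Polish (by \cite[Theorem 7.5]{lelek}) and almost zero-dimensional (by the neighborhood basis just described), hence embeds as a closed subset of $\Ec$.

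The main obstacle is verifying almost zero-dimensionality of $E(X) \cong G_0^\varphi$ cleanly — i.e.\ producing, at each endpoint, a neighborhood basis consisting of sets that are closed in some coarser ($C$-product) topology on a space containing $E(X)$. The subtlety is that $\varphi$ is only USC, so $G_0^\varphi$ is not closed in $C \times \mathbb{R}$; the fix is to work inside $L_0^\varphi \setminus (C \times \{0\})$, which \emph{is} a $G_\delta$ in $C \times \mathbb{R}$ hence Polish, and to note that within it the sets $(U \times (a, \infty)) \cap (L_0^\varphi \setminus (C \times \{0\}))$ are open with closure missing the ``top'' $G_0^\varphi$ except along $\varphi^\leftarrow(\{a\})$ and $\partial U$; intersecting back with $G_0^\varphi$ and choosing $a$ to avoid the (countably many) values of $\varphi$ on a countable dense subset of $C$ makes these sets clopen in $E(X)$ with the separation property needed. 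Once that is in hand, the cited uniqueness/universality of $\Ec$ among Polish almost zero-dimensional spaces closes the argument immediately. I would also remark that this lemma together with the known fact that the Lelek fan realizes $\Ec$ itself shows the bound is sharp.
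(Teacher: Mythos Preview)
Your approach is correct and essentially the same as the paper's: reduce $E(X)$ to $G_0^\varphi$ via Lemma~\ref{lemma:smooth fan USC}, then invoke Dijkstra--van Mill. The paper is simply more direct, citing \cite[Theorem~3.2]{DvM} (which already states that $G_0^\varphi$ for a USC $\varphi$ on a zero-dimensional space embeds as a closed subset of $\Ec$) rather than unpacking the almost zero-dimensionality and universality of $\Ec$ as you do; incidentally, your $\varepsilon$-avoidance step is unnecessary, since the sets $(U \times [a,\infty)) \cap G_0^\varphi$ are already closed in the coarser topology pulled back from $C$ precisely because $\varphi$ is USC.
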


\begin{proof}
Let $X$ be a smooth fan.  By Lemma \ref{lemma:smooth fan USC}, $E(X) \approx G_0^\varphi$ for some USC function $\varphi \colon C \to [0,1]$ defined on the Cantor set $C$.  Thus by \cite[Theorem 3.2]{DvM}, $E(X)$ is homeomorphic to a closed subset of $\Ec$.
\end{proof}

We are not aware of whether the converse implication of Lemma \ref{lemma:E(X) Ec} is true.  This would provide a very nice connection between the theory of almost zero-dimensional spaces and the theory of smooth fans.

\begin{ques}
\label{ques:AZD-smooth_fans}
Let $E$ be a closed subset of $\Ec$.  Is there a smooth fan $X$ such that $E(X)$ is homeomorphic to $E$?
\end{ques}

We now turn our attention to which spaces may be realized as the endpoint set of an endpoint rigid smooth fan.  The results of the previous section produce several examples of such spaces: for example the natural numbers, the irrationals, and the product of the Cantor set with the natural numbers (Propositions \ref{prop:K-fan rigid} and \ref{prop:K-fan exists}).  Propositions \ref{prop:K-fan exists generalization 1}, \ref{prop:K-fan exists generalization 2}, and \ref{prop:sum K-fan} (together with Proposition \ref{prop:K-fan rigid}) yield many more examples, such as the disjoint union of any compact zero-dimensional space with any one of the previously mentioned three spaces, disjoint unions of any of these examples, etc.

On the other hand, it is clear that no compact space may be the endpoint set of an endpoint rigid smooth fan.  Indeed, if $X$ is a smooth fan and $E(X)$ is compact, then $E(X)$ is zero-dimensional and $X$ is homeomorphic to the cone over $E(X)$, and one can easily find a non-trivial autohomeomorphism of $E(X)$, which naturally extends to a non-trivial autohomeomorphism of $X$.  Theorem \ref{thm:E(X) not dense} of the next section implies that $\Ec$ also cannot be realized as the endpoint set of an endpoint rigid smooth fan.  We leave the following questions:

\begin{ques}
\label{open-1}
Let $E$ be a non-compact closed subset of $\Ec$ that is not homeomorphic to complete Erd\H{o}s space.  Is there an endpoint rigid smooth fan $X$ such that $E(X)$ is homeomorphic to $E$?
\end{ques}

\begin{ques}
\label{open-2}
Let $E$ be a locally compact, non-compact, closed subset of $\Ec$.  Is there an endpoint rigid smooth fan $X$ such that $E(X)$ is homeomorphic to $E$?
\end{ques}

\begin{ques}
\label{open-3}
Let $E$ be a closed subset of $\Ec$ such that there is a unique smooth fan $X$ such that $E(X)$ is homeomorphic to $E$.  Is $E$ either compact or homeomorphic to complete Erd\H{o}s space?
\end{ques}

\section{Characterization of the Lelek fan}
\label{sec:Lelek fan characterization}

In this section we will prove that the only smooth fan whose set of endpoints is homeomorphic to $\Ec$ is the Lelek fan.

\begin{thm}
\label{thm:E(X) not dense}
If $X$ is a smooth fan and $E(X)$ is not dense in $X$, then $E(X)$ has a point of zero-dimensionality.  In particular, $E(X)$ is not homeomorphic to $\Ec$.
\end{thm}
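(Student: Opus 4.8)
The plan is to exploit the hypothesis that $E(X)$ is not dense to locate a "missing arc" in the fan, and then to show that $E(X)$ inherits a point of zero-dimensionality from the endpoints whose legs are forced to pass near that missing arc. Concretely, I would first realize $X$ via Lemma \ref{lemma:smooth fan USC} as $L_0^\varphi/(C\times\{0\})$ for a non-degenerate USC function $\varphi\colon C\to[0,1]$, so that $E(X)\approx G_0^\varphi=\{\langle x,\varphi(x)\rangle:\varphi(x)>0\}$. Since $E(X)$ is not dense in $X$, there is a point of $X\smallsetminus\{p\}$ with a neighborhood in $X$ missing $E(X)$; translating through the quotient, this gives a clopen set $U\subseteq C$ and reals $0<a<b$ such that $\sup_{x\in U}\varphi(x)\geq b$ but no point $\langle x,y\rangle$ with $x\in U$ and $a\le y\le b$ is in $G_0^\varphi$; equivalently, for every $x\in U$, either $\varphi(x)<a$ or $\varphi(x)>b$. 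Let $V=\{x\in U:\varphi(x)>b\}$; by upper semi-continuity $V$ is closed in $U$ (its complement in $U$ is $\{x:\varphi(x)<a\}$, which is open by USC), hence $V$ is compact, and it is nonempty since $\sup_{x\in U}\varphi(x)\ge b$ together with the "gap" forces some $x$ with $\varphi(x)>b$.

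Next I would pick any $x_0\in V$ and consider the endpoint $e_0=\langle x_0,\varphi(x_0)\rangle\in E(X)$, and show $e_0$ is a point of zero-dimensionality of $E(X)$. The idea is that near $e_0$, the endpoint set looks like $G_0^{\varphi\restriction W}$ for a small clopen $W\subseteq V$ (clopen in $C$), and because all values $\varphi(x)$ for $x$ near $x_0$ in $V$ lie in $(b,1]$ — a set bounded away from $0$ — one can separate $e_0$ from points at a controlled distance using clopen boxes. More precisely, the relevant metric on $E(X)\subseteq C\times\mathbb R$ makes the map $x\mapsto\langle x,\varphi(x)\rangle$ a homeomorphism from $\{x\in U:\varphi(x)>b\}$ onto its image (this is where USC and the lower bound $\varphi>b$ are used: the graph of a USC function bounded below by a positive constant, over a zero-dimensional domain, is homeomorphic to the domain, since small fibers and the absence of vertical accumulation toward $0$ make the projection open on this piece). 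Then, since $C$ is zero-dimensional and $V$ is a clopen-in-$C$ neighborhood basis at $x_0$ consisting of Cantor-or-finite sets, we get that $e_0$ has arbitrarily small clopen neighborhoods in $E(X)$ of the form $(W\times(b,1])\cap E(X)$ with $W$ clopen in $C$, $W\subseteq V$. Hence $E(X)$ is zero-dimensional at $e_0$.

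The last sentence of the theorem follows immediately: complete Erd\H{o}s space $\Ec$ is totally disconnected but has no point of zero-dimensionality (indeed it is nowhere zero-dimensional, by Erd\H{o}s's original analysis, cf. \cite{er} and the discussion in \cite{DvM}), so if $E(X)\approx\Ec$ then $E(X)$ cannot have a point of zero-dimensionality, forcing $E(X)$ to be dense in $X$; combined with \cite{charatonik-lelek}, $X$ is then the Lelek fan.

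I expect the main obstacle to be the middle step: carefully justifying that over the compact piece $V$ where $\varphi>b$, the graph $G_0^{\varphi}$ restricted there is genuinely homeomorphic (not merely a continuous bijective image) to a subspace of the zero-dimensional set $C$, so that clopen subsets of $C$ pull back to clopen subsets of $E(X)$. The subtlety is that $E(X)$ carries the $\ell^2$-type / subspace-of-$C\times\mathbb R$ topology, not the graph topology a priori; one must check that on the region $\varphi>b$ there is no "vertical" accumulation that would spoil the projection being a homeomorphism. Upper semi-continuity gives that $\varphi$ is small-or-large on $U$ with no intermediate values, and this dichotomy, together with $b>0$, is exactly what prevents a sequence of endpoints $\langle x_k,\varphi(x_k)\rangle$ with $x_k\to x_0\in V$ from having $\varphi(x_k)$ bounded away from $\varphi(x_0)$ below while staying in $E(X)$ near $e_0$ — any such sequence would have to have $\varphi(x_k)>b$ eventually, and USC then pins $\limsup\varphi(x_k)\le\varphi(x_0)$, while a separate argument (shrinking $W$) handles the values in $(b,\varphi(x_0))$. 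Once this local structure is pinned down, the zero-dimensionality at $e_0$ and the consequences for $\Ec$ are routine.
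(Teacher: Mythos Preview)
There is a genuine gap in your argument: picking an \emph{arbitrary} $x_0\in V$ does not work, because your claim that ``the graph of a USC function bounded below by a positive constant, over a zero-dimensional domain, is homeomorphic to the domain'' is false in general. The inverse of the projection, $x\mapsto\langle x,\varphi(x)\rangle$, is continuous precisely when $\varphi$ is continuous, and upper semi-continuity alone gives you only $\limsup_{x\to x_0}\varphi(x)\le\varphi(x_0)$, not any lower control. Consequently your candidate clopen sets $(W\times(b,1])\cap E(X)$ need not be small: they have vertical extent $\sup_{x\in W}\varphi(x)-\inf_{x\in W}\varphi(x)$, and shrinking $W$ does not force this to $0$. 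For a concrete counterexample, let $\psi\colon C\to[0,1]$ be a Lelek USC function (so $G_0^\psi\approx\Ec$) and set $\varphi=\tfrac12+\tfrac12\psi$. Then $\varphi\ge\tfrac12$, so $E(X)$ is not dense, $V=C$, and $G_0^\varphi$ is homeomorphic to the full graph of $\psi$; since $G_0^\psi$ is open in this graph and nowhere zero-dimensional, $G_0^\varphi$ fails to be zero-dimensional at every point $\langle x_0,\varphi(x_0)\rangle$ with $\psi(x_0)>0$. Your ``separate argument (shrinking $W$)'' for the values in $(b,\varphi(x_0))$ cannot succeed here: in any clopen $W\ni x_0$ the values of $\psi$ are dense in $[0,\sup_W\psi]$.

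The paper repairs this by not choosing $x_0$ arbitrarily but constructing it via an $\omega$-step recursion: at stage $n$ one lets $a_{n+1}=\inf\{\varphi(x):x\in V_n,\ \varphi(x)>a_0\}$, then uses USC to pass to a small clopen $V_{n+1}\subset V_n$ on which $\varphi$ is trapped in $[a_{n+1},a_{n+1}+2^{-(n+1)})$ (above $a_0$). The nested $V_n$'s pick out a point $z$, and the sets $(V_n\times[a_0,\infty))\cap G_0^\varphi$ form a clopen local base there because their vertical extent is $\le b_n-a_n\le 2^{-n}$. In the example above this recursion drives you to a point with $\psi(z)=0$, which is indeed a point of zero-dimensionality. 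The essential idea you are missing is precisely this infimum step, which converts USC (one-sided control) into two-sided smallness.
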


By Lemma \ref{lemma:smooth fan USC}, Theorem \ref{thm:E(X) not dense} is implied by the following statement about USC functions, which we prove below.  Recall that $G_0^\varphi$ and $L_0^\varphi$ are defined in Section \ref{sec:USC functions}.

\begin{thm}
Let $\varphi \colon C \to [0,\infty)$ be an upper semi-continuous function defined on the Cantor set $C$.  Suppose there exists a non-empty open subset $U \subseteq C$ and $a_0 \in (0,\infty)$ such that $(U \times \{a_0\}) \cap L_0^\varphi$ is non-empty and disjoint from $G_0^\varphi$.  Then $G_0^\varphi$ has a point of zero-dimensionality.  In particular, $G_0^\varphi$ is not homeomorphic to $\Ec$.
\end{thm}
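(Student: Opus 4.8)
The plan is to exhibit a single point $e^* \in G_0^\varphi$ that has arbitrarily small neighbourhoods which are clopen in $G_0^\varphi$; such a point is, by definition, a point of zero-dimensionality, and since $\Ec$ has no such point this will also yield $G_0^\varphi \not\approx \Ec$. First I would set up the following reductions. From $(U \times \{a_0\}) \cap L_0^\varphi \neq \emptyset$ there is $x_0 \in U$ with $\varphi(x_0) \geq a_0$, and since this intersection misses $G_0^\varphi$, no point of $U$ has $\varphi$-value $a_0$; in particular $\varphi(x_0) > a_0$. Replacing $U$ by a clopen (in $C$) neighbourhood of $x_0$ contained in $U$, we may assume $U$ is clopen. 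Put $F = \{x \in U : \varphi(x) \geq a_0\}$; because no point of $U$ has $\varphi$-value $a_0$ this equals $\{x \in U : \varphi(x) > a_0\}$, and since $\varphi$ is USC the set $\varphi^\leftarrow[[a_0,\infty)]$ is closed, so $F$ is a nonempty compact subset of $C$ on which $\varphi$ restricts to a bounded USC function with values in $(a_0,\infty)$.

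The crucial observation is that for \emph{every} clopen $V \subseteq U$ the set $N_V := G_0^\varphi \cap \bigl(V \times (a_0,\infty)\bigr) = \{\langle x,\varphi(x)\rangle : x \in V,\ \varphi(x) > a_0\}$ is clopen in $G_0^\varphi$. Indeed $V \times (a_0,\infty)$ is open and $V \times [a_0,\infty)$ is closed in $C \times \mathbb{R}$, and since $\varphi$ omits the value $a_0$ on $V$ we have $N_V = G_0^\varphi \cap (V \times (a_0,\infty)) = G_0^\varphi \cap (V \times [a_0,\infty))$; the first description shows $N_V$ is open in $G_0^\varphi$ and the second shows it is closed. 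This is the one place where the gap hypothesis does real work.

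Next I would locate the point $e^*$. Since $F$ is a compact metric space and $\varphi|_F$ is bounded and USC, $\varphi|_F$ is of Baire class one — it is the pointwise decreasing limit of the continuous functions $x \mapsto \sup_{y \in F}\bigl(\varphi(y) - n\,d(x,y)\bigr)$, $n \in \mathbb{N}$ — so by Baire's theorem $\varphi|_F$ has a point of continuity $x^* \in F$. Set $v = \varphi(x^*)$; then $v > a_0 > 0$, so $e^* := \langle x^*,v\rangle \in G_0^\varphi$. As $V$ ranges over a neighbourhood basis of $x^*$ consisting of clopen subsets of $U$, each $N_V$ is a clopen neighbourhood of $e^*$ in $G_0^\varphi$ by the previous paragraph, and since $N_V = \{\langle x,\varphi(x)\rangle : x \in V \cap F\}$ with $V \cap F \to \{x^*\}$ and, by continuity of $\varphi|_F$ at $x^*$, $\varphi(V \cap F) \to \{v\}$, we get $\operatorname{diam}(N_V) \to 0$. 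Hence the sets $N_V$ form a clopen neighbourhood basis of $e^*$ in $G_0^\varphi$, so $e^*$ is a point of zero-dimensionality of $G_0^\varphi$. The final assertion $G_0^\varphi \not\approx \Ec$ then follows from the known fact that $\Ec$ has no point of zero-dimensionality (for instance because it is homogeneous with $\dim \Ec = 1$, or because it is cohesive in the sense of Dijkstra and van Mill).

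I expect the main obstacle to be the choice of $x^*$. It is tempting to take a continuity point of $\varphi$ on all of $U$, or the point at which $\varphi|_U$ attains its maximum; but the gap may be arranged so that $F$ is nowhere dense in $U$, in which case every continuity point of $\varphi|_U$ lies below level $a_0$ and is useless, and the maximum point of $\varphi|_U$ need not be a continuity point even of $\varphi|_F$. The resolution is to move to the compact set $F$ — which is also exactly where upper semicontinuity and the gap combine to give the clopenness of the $N_V$ — and to apply there the classical fact that a bounded USC function on a compact metric space has a point of continuity. Everything else is routine manipulation of clopen sets.
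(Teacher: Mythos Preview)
Your proof is correct. Both your argument and the paper's hinge on the same observation---that the gap at level $a_0$ makes each set $G_0^\varphi \cap (V \times (a_0,\infty))$ clopen for clopen $V \subseteq U$---and both ultimately land on a continuity point of $\varphi|_F$ where $F = U \cap \varphi^\leftarrow[[a_0,\infty)]$. The difference is in how that point is produced. The paper gives a direct recursive construction: at each stage it takes $a_{n+1}$ to be the infimum of $\varphi$ on the current clopen set (above level $a_0$), then uses upper semi-continuity to shrink to a clopen $V_{n+1}$ on which $\varphi$ is trapped in a short interval $[a_{n+1},b_{n+1})$; the intersection point $z$ is then, in effect, a hand-built continuity point of $\varphi|_F$. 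You instead name the set $F$, observe that $\varphi|_F$ is Baire class one (via the standard sup-convolution approximants), and invoke Baire's theorem to obtain a continuity point $x^*$ directly. Your route is more conceptual and makes explicit what is really going on; the paper's route is fully self-contained and avoids any appeal to Baire class one theory. Your closing remark about the pitfalls of working on $U$ rather than on $F$ is apt and correctly identifies why the restriction to $F$ is essential.
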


\begin{proof}
Fix a metric $d$ on $C$.  We will use the metric $\rho$ on $C \times [0,1]$ defined by
\[ \rho \left( \langle x_1,y_1 \rangle, \langle x_2,y_2 \rangle \right) = \max\{d(x_1,x_2), |y_1-y_2|\} .\]

To locate a point in $G_0^\varphi$ of zero-dimensionality, we will do a recursive construction in $\omega$ steps.  At step $n$, we will define a clopen set $V_n$ in $C$ and $a_n,b_n \in (0,\infty]$ with the following properties:
\begin{enumerate}[label=(\arabic*)]
    \item \label{recursion 1} $V_{n+1} \subseteq V_n$ for all $n \in \omega$;
    \item \label{recursion 2} $a_n \leq a_{n+1} < b_{n+1} \leq b_n$ for all $n \in \omega$;
    \item \label{recursion 3} $\emptyset \neq \varphi[V_n] \cap [a_0,\infty) \subset [a_n,b_n)$ for all $n \in \omega$; and
    \item \label{recursion 4} $\textrm{diam}(V_n) < 2^{-n}$ and $b_n - a_n \leq 2^{-n}$ for all $n \geq 1$.
\end{enumerate}

Let us give the construction.  In step $0$ we start with $a_0$ as in the hypotheses of the Theorem, let $V_0$ be a non-empty clopen subset of $U$ such that $(V_0 \times \{a_0\}) \cap L_0^\varphi \neq \emptyset$, and define $b_0 = \infty$.

Now let $n \in \omega$, and assume that the construction has been carried out for step $n$.  We now proceed with step $n+1$.  Let
\[ a_{n+1} = \inf\{y \in (a_0,\infty) \colon \exists x \in V_n \textrm{ such that } \varphi(x) = y\} .\]
Note that $a_{n+1} < b_n$ since by condition \ref{recursion 3} there exists a point $x \in V_n$ with $\varphi(x) \in [a_n,b_n)$.  Let $b_{n+1} = \min\{a_{n+1} + 2^{-(n+1)},b_n\}$.

Choose a point $x_{n+1} \in V_n$ such that $\varphi(x_{n+1}) \in [a_{n+1},b_{n+1})$.  Let $V_{n+1}$ be a clopen neighborhood of $x_{n+1}$ in $C$ of diameter less than $2^{-(n+1)}$, and (using the fact that $\varphi$ is USC) such that $\varphi[V_{n+1}] \cap [a_0,\infty) \subset [a_{n+1},b_{n+1})$.  This completes the recursive construction.

Now, the sets $V_n \cap \varphi^\leftarrow[[a_0,\infty)]$, $n \in \omega$, are nested non-empty closed subsets of $C$, hence there exists a point $z \in \bigcap_{n \in \omega} \left( V_n \cap \varphi^\leftarrow[[a_0,\infty)] \right)$.  For each $n \in \omega$, let
\[ W_n = (V_n \times [a_0,\infty)) \cap G_0^\varphi .\]
Observe that $\langle z,\varphi(z) \rangle \in W_n$, and $W_n$ is clearly closed by definition since $V_n$ is closed.  $W_n$ is open as well, since $V_n$ is also open and $(V_n \times \{a_0\}) \cap G_0^\varphi = \emptyset$.  By condition \ref{recursion 3}, $W_n \subset V_n \times [a_n,b_n)$, hence by condition \ref{recursion 4} it has diameter less than $2^{-n}$.  Thus, the sets $W_n$, $n \in \omega$, form a local basis of clopen sets at $\langle z,\varphi(z) \rangle$, which is what we wanted to find.
\end{proof}

From Theorem \ref{thm:E(X) not dense}, we obtain a new characterization of the Lelek fan: it is the unique smooth fan whose set of endpoints is homeomorphic to complete Erd\H{o}s space.  As such, we extend a result of W.\ Charatonik's (\cite[Corollary]{charatonik-lelek}):

\begin{coro}
\label{coro:Lelek fan characterization}
The following are equivalent for a smooth fan $X$ with vertex $p$:
\begin{enumerate}[label=(\roman*)]
    \item $X$ is homeomorphic to the Lelek fan;
    \item $E(X)$ is dense in $X$;
    \item $E(X) \cup \{p\}$ is connected;
    \item Every non-degenerate confluent image of $X$ is homeomorphic to it;
    \item Every non-degenerate monotone image of $X$ is homeomorphic to it; and
    \item $E(X)$ is homeomorphic to complete Erd\H{o}s space.
\end{enumerate}
\end{coro}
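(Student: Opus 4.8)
The plan is to obtain this six-way equivalence as a modest extension of what is already available: the equivalences among (i)--(v) form Charatonik's corollary to \cite{charatonik-lelek}; the implication (i) $\Rightarrow$ (vi) is the theorem of Kawamura, Oversteegen and Tymchatyn from \cite{k-o-t}; and the one genuinely new implication, (vi) $\Rightarrow$ (ii), is exactly the ``in particular'' clause of Theorem \ref{thm:E(X) not dense}. Chaining these gives the cycle (vi) $\Rightarrow$ (ii) $\Rightarrow$ (i) $\Rightarrow$ (vi), which, added to the already-known block (i) $\Leftrightarrow \cdots \Leftrightarrow$ (v), closes the equivalence.

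First I would record that (i) $\Leftrightarrow$ (ii) is immediate from the definition of the Lelek fan in Section \ref{sec:smooth fans} together with the uniqueness statement of \cite{charatonik-lelek}: a smooth fan is homeomorphic to the Lelek fan precisely when its set of endpoints is dense. The equivalences of (ii) with (iii), and of (i) with (iv) and with (v), are exactly what Charatonik proves in the corollary to \cite{charatonik-lelek}, so I would cite that directly rather than reproduce the arguments. For (i) $\Rightarrow$ (vi), I would cite \cite{k-o-t}, where the set of endpoints of the Lelek fan is shown to be homeomorphic to $\Ec$.

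It then remains to prove (vi) $\Rightarrow$ (ii), and here I would invoke Theorem \ref{thm:E(X) not dense} in contrapositive form: if $E(X)$ were not dense in $X$, then $E(X)$ would have a point of zero-dimensionality, whereas $\Ec$ has no such point (this goes back to Erd\H{o}s \cite{er}; see also \cite{dijkstra-vm-memAMS}), so $E(X)$ could not be homeomorphic to $\Ec$. Hence (vi) implies (ii), and together with (ii) $\Rightarrow$ (i) $\Rightarrow$ (vi) we conclude (i) $\Leftrightarrow$ (vi). I do not anticipate a serious obstacle in assembling the corollary: all the real work has already been done in Theorem \ref{thm:E(X) not dense} and in the cited literature. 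The only point worth a moment's attention is to be sure that ``having a point of zero-dimensionality'' is quoted as a bona fide homeomorphism invariant that $\Ec$ fails to possess — which is precisely why I would keep the explicit reference to the fact that $\Ec$ is nowhere zero-dimensional.
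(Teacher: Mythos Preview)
Your proposal is correct and matches the paper's approach: the corollary is stated without an explicit proof, presented as an immediate extension of Charatonik's result in \cite{charatonik-lelek} by adjoining condition (vi) via Theorem \ref{thm:E(X) not dense} (for (vi) $\Rightarrow$ (ii)) and \cite{k-o-t} (for (i) $\Rightarrow$ (vi)). Your unpacking of the logical dependencies, including the remark that one needs $\Ec$ to be nowhere zero-dimensional, is exactly the content the paper leaves implicit.
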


We remark that the analog of Theorem \ref{thm:E(X) not dense} for smooth dendroids does not hold.  For a counterexample, consider a smooth dendroid consisting of two copies of the Lelek fan whose tops are joined by an arc.  In this dendroid, the set of endpoints is not dense, but it is $1$-dimensional at every point.

\section*{Acknowledgements}
We wish to thank the referee for their valuable suggestions which helped us improve the paper.

The second named author was partially supported by NSERC grant RGPIN-2019-05998. We would also like to thank the Department of Mathematics and the Division of Basic Sciences and Engineering of the \textit{Universidad Autónoma Metropolitana, Iztapalapa} for funding the second named author's visit to Mexico city during May, 2019.

\bibliographystyle{amsplain}
\bibliography{2022-endpoint_rigid_fan.bib}

\end{document}